\newif\iftexpad
\newtheorem{lemma}{Lemma}[section]
\newtheorem{proposition}{Proposition}[section]
\newtheorem{remark}{Remark}[section]
\newtheorem{theorem}[lemma]{Theorem}
\newtheorem{corollary}{Corollary}[section]
\newtheorem{conjecture}{Conjecture}[section]
\newtheorem{question}[lemma]{Question}
\newcommand{\R}{{\mathbb R}}
\newcommand{\cref}[1]{\ref{#1}}
\crefname{equation}{Equation}{Equations}
\crefname{figure}{Figure}{Figures}
\crefname{tabular}{Table}{Tables}
\crefname{section}{Section}{Sections}
\crefname{proposition}{Proposition}{Propositions}
\crefname{conjecture}{Conjecture}{Conjectures}
\crefname{corollary}{Corollary}{Corollaries}
\crefname{remark}{Remark}{Remarks}
\Crefname{equation}{Eq.}{Eqs.}
\Crefname{figure}{Fig.}{Figs.}
\Crefname{tabular}{Tab.}{Tabs.}
\Crefname{section}{Sec.}{Secs.}
\Crefname{proposition}{Prop.}{Props.}
\Crefname{conjecture}{Conj.}{Conjs.}
\Crefname{corollary}{Cor.}{Cors.}
\Crefname{remark}{Rem.}{Rems.}
\newcommand{\quadex}{Quad\xspace}
\newcommand{\penthouseex}{Penthouse\xspace}
\newcommand{\hexhouseex}{Hexhouse\xspace}
\title{Polygonal symplectic billiards}
\author{Peter Albers}
\author{Gautam Banhatti}
\author{Filip Sadlo}
\author{Richard Schwartz}
\author{Serge Tabachnikov}
\address{Peter Albers,
 Mathematisches Institut and Interdisciplinary Center for Scientific Computing (IWR),
 Ruprecht-Karls-Universit\"at Heidelberg,
 Germany}
\email{peter.albers@uni-heidelberg.de}
\address{Gautam Banhatti,
 Mathematisches Institut,
 Ruprecht-Karls-Universit\"at Heidelberg,
 Germany}
\email{gautam@posteo.de}
\address{Filip Sadlo,
 Interdisciplinary Center for Scientific Computing (IWR),
 Ruprecht-Karls-Universit\"at Heidelberg,
 Germany}
\email{sadlo@uni-heidelberg.de}
\address{Richard Schwartz,
Department of Mathematics,
Brown University,
USA}
\email{res@math.brown.edu}
\address{Serge Tabachnikov,
 Department of Mathematics,
 Pennsylvania State University,
USA}
\email{tabachni@math.psu.edu}
\date{\today}
\begin{document}

\begin{abstract}
In this article, we study polygonal symplectic billiards. We provide new results, some of which are inspired by numerical investigations. In particular, we present several polygons for which all orbits are periodic. We demonstrate their properties and derive various conjectures using two numerical implementations.
\end{abstract}

\maketitle

\pagestyle{headings}
\markleft{\sc Peter Albers, Gautam Banhatti, Filip Sadlo, Richard Schwartz, Serge Tabachnikov}

\section{Introduction} \label{sect:intro}

\begin{figure}[t]
  \centering%
  \includegraphics[width=2in]{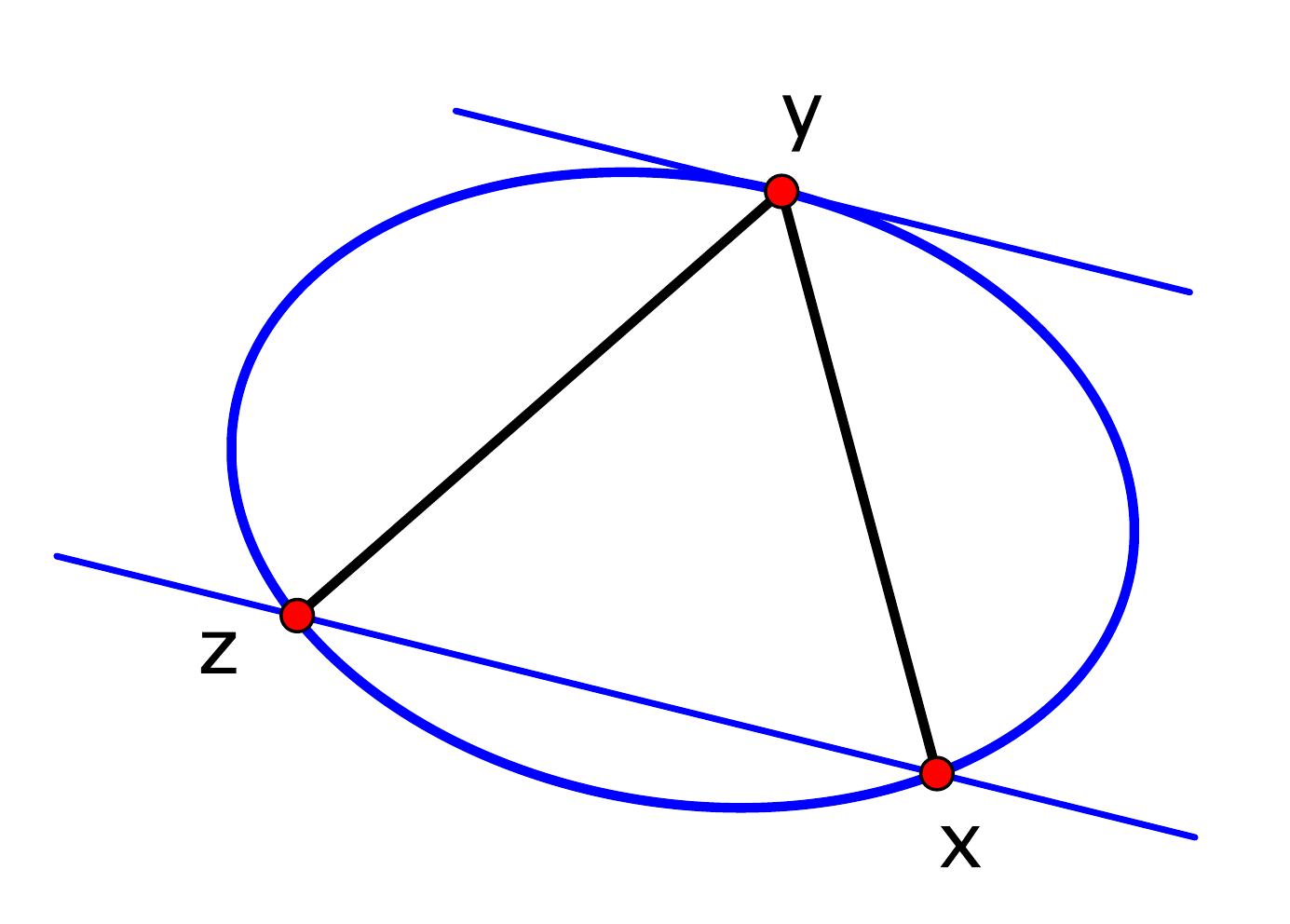}%
  \caption{\label{fig:reflection}
    The symplectic billiard reflection: $xy$ reflects to $yz$ if $xz$ is parallel to the tangent line of the curve at point $y$.
  }
\end{figure}

Planar symplectic billiard is a discrete-time dynamical system on oriented chords of a piecewise smooth convex closed curve (billiard table) in the plane depicted in \cref{fig:reflection}. Symplectic billiards were introduced by Albers and Tabachnikov~\cite{AT}. The name symplectic billiards is due to the fact that they can be defined in linear symplectic space. In the plane, symplectic billiards commute with affine transformations.

A symplectic billiard table may be a polygon. In this case, the reflection is not defined if the head of an oriented chord is a vertex of the polygon or when the head and the tail of an oriented chord belong to parallel sides. In this article, we only consider convex polygons, (even though symplectic billiards can also be defined on non-convex polygons).

So far, two classes of polygons were considered~\cite{AT}, affine-regular polygons and trapezoids. In both cases, all symplectic billiard orbits are periodic. In this paper, we describe other families of polygons with this property. We call them periodic polygonal symplectic billiards.

Let us briefly mention two other, much better known, classes of polygonal billiards: Euclidean billiards and outer billiards (see, e.g., the book \cite{Ta} for a survey). In the former billiards, periodic trajectories appear in 1-parameter families of mutually parallel trajectories, but trajectories with different initial directions, no matter how close, will eventually diverge and hit different sides of the polygon. The celebrated Ivrii conjecture states that the set of periodic orbits of a planar billiard has zero phase area. However, the example of an equilateral right spherical triangle shows that in spherical geometry billiards all of whose orbits are closed exist. Moreover, in outer billiards, which is played outside of the curve, it is possible for all trajectories to be periodic: this happens for all lattice polygons, see again the book \cite{Ta}.

In this article some of our results are proof-based while others are driven by numerical experiments. Based on that we formulate several conjectures. For our numerical investigations we utilized two research codes developed in the context of this paper, one for effective determination of periodicity and one for interactive exploration of phase-space structure. Beyond that we only aware of implementations in this field by Boshe-Ploes et al.\ \cite{BNAL} and Raymond Friend in his honors thesis at Pennsylvania State Universtiy.

\section{General facts about polygonal symplectic billiards} \label{sect:gen}

In this section, we recall known and prove some new facts about polygonal symplectic billiards.

\subsection{Phase space and phase area.}$ $\\[1ex] 
Let ${\bf P}$ be an $n$-gon with vertices $P_1,P_2,\ldots,P_n$, oriented counterclockwise. We define the vectors $v_i=P_{i+1}-P_i$, where the indices are understood cyclically and denote by $v_i \times v_j$ the set of chords whose tail is in the interior of the side~$P_i P_{i+1}$ and whose head is in the interior of the side $P_j P_{j+1}$. We use bracket $[\cdot , \cdot]$ to denote the determinant of two vectors. 

Let $T$ be the symplectic billiard map defined as in \cref{fig:reflection}. It is piecewise continuous. Its phase space is the union of the sets $v_i \times v_j$, $i,j=1,\dots,n$, with $i\neq j$ and $v_i$ and $v_j$ not being parallel. 
After parameterizing the perimeter of ${\bf P}$, the phase space is represented by a square, tiled by the rectangles $v_i \times v_j$. The rectangles $v_i \times v_j$ with $v_i$ and $v_j$ being parallel, along with the squares $v_i \times v_i$, are excised (they are represented by black squares in the pictures below). In this representation, the first coordinate describes the position of the tail of a chord, and the second coordinate the position of its head. The map $T$ is continuous in each rectangle $v_i \times v_j$, see \cref{lm:area}.

The phase space has an involution that reverses the direction of a chord. This ``time reversal" involution conjugates $T$ and $T^{-1}$. We reduce the phase space by half by considering only the rectangles $v_i \times v_j$ with $[v_i,v_j]>0$ and denote this  space by $\Phi_{\bf P}$. Further, define a piece-wise constant area form on the phase space by declaring that the total area of a rectangle $v_i \times v_j$ is $[v_i,v_j]$. We parametrize each side by arc-length and denote the corresponding coordinates by $x$, $y$, $z$ etc. Then, if $\alpha$ is the angle between $v_i$ and $v_j$, and $dx$ and $dy$ are the respective oriented length elements on these sides, then the area form equals $\sin\alpha\ dx \wedge dy$.

\begin{lemma} \label{lm:area}
The map $T$ is area preserving. It has the form
$$
T: (x,y) \mapsto (y, z=ax+b),\ \ x\in P_i P_{i+1}, y\in P_j P_{j+1}, z\in P_k P_{k+1},
$$
with $a=-\frac{\sin\alpha}{\sin\beta}$ and $b$ depending on $i,j,k$ (but not on $y$), see \cref{fig:distortion}. 
\end{lemma}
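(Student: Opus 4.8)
The plan is to turn the reflection rule of \cref{fig:reflection} into a single linear-algebraic identity and read off both the affine form of the map and its area-preservation from it. Parametrize the side $P_iP_{i+1}$ by arc length, so the tail is $\mathbf{X}=P_i+x\,e_i$ with $e_i=v_i/|v_i|$, and write the new head as $\mathbf{Z}=P_k+z\,e_k$. Since the side through $y$ has direction $v_j$, the condition ``$xz$ is parallel to the tangent at $y$'' is exactly $\mathbf{Z}-\mathbf{X}=s\,v_j$ for some scalar $s$.

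First I would eliminate the auxiliary scalar $s$ by pairing this vector identity with $v_j$ under the determinant $[\cdot,\cdot]$, using $[v_j,v_j]=0$. This gives
\[
z\,[e_k,v_j]-x\,[e_i,v_j]+[P_k-P_i,\,v_j]=0,
\]
which already reveals the two structural points: $z$ depends on $x$ but \emph{not} on $y$, and the dependence is affine, $z=ax+b$ with $a=[e_i,v_j]/[e_k,v_j]$ and $b=-[P_k-P_i,v_j]/[e_k,v_j]$ a constant fixed by $P_i,P_k,v_j$, i.e.\ by the indices $i,j,k$ alone. Here I would invoke convexity to ensure that, for $(x,y)$ in a fixed rectangle $v_i\times v_j$, the line through $\mathbf{X}$ in direction $v_j$ meets the boundary in a well-defined side $P_kP_{k+1}$, so that $k$ (hence $a,b$) is locally constant and $v_i\times v_j$ is partitioned into strips according to $k$.

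Next I would translate the determinants into sines of angles. Writing $v_j=|v_j|\,e_j$ gives $[e_i,v_j]=|v_j|\,[e_i,e_j]=|v_j|\sin\alpha$ and $[e_k,v_j]=|v_j|\,[e_k,e_j]=-|v_j|\sin\beta$, where $\alpha$ is the oriented angle between $v_i$ and $v_j$ and $\beta$ that between $v_j$ and $v_k$, the sign coming from the reversed order of arguments. Hence $a=-\sin\alpha/\sin\beta$, as claimed. Area preservation then follows from the form of the map: since $T(x,y)=(y,ax+b)$, its Jacobian is $\left(\begin{smallmatrix}0&1\\ a&0\end{smallmatrix}\right)$ with determinant $-a$, and pulling the target form $\sin\beta\,dy\wedge dz$ back along $T$ yields $\sin\beta\,dy\wedge(a\,dx)=-a\sin\beta\,dx\wedge dy$, which equals the source form $\sin\alpha\,dx\wedge dy$ precisely because $-a\sin\beta=\sin\alpha$. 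Thus the geometric identity $a=-\sin\alpha/\sin\beta$ is exactly the condition for $T$ to preserve the area form, and both assertions of the lemma fall out of the same computation.

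I expect the only genuine obstacle to be the bookkeeping of orientations: fixing the counterclockwise convention so that $\alpha,\beta$ and the determinants carry consistent signs, and checking that convexity selects the \emph{admissible} intersection side $k$ (rather than the second intersection of the line with $\partial\mathbf{P}$) independently of $y$. Once the signs are pinned down, each remaining step is a one-line verification.
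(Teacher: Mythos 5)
Your proposal is correct and takes essentially the same approach as the paper: both proofs rest on the observation that the reflection is the parallel projection of side $i$ onto side $k$ along the direction of side $j$, hence an affine map $x \mapsto z = ax+b$ independent of $y$ with $a = -\sin\alpha/\sin\beta$, and area preservation follows from the same identity $\sin\alpha\, dx\wedge dy = \sin\beta\, dy\wedge dz$. The only difference is cosmetic: you derive the distortion coefficient by explicit determinant algebra (eliminating the scalar $s$ via $[\,\cdot\,,v_j]$), whereas the paper simply cites the geometric fact that parallel projection reverses orientation and distorts length by $\sin\alpha/\sin\beta$.
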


\begin{figure}[t]
  \centering%
  \includegraphics[width=3.2in]{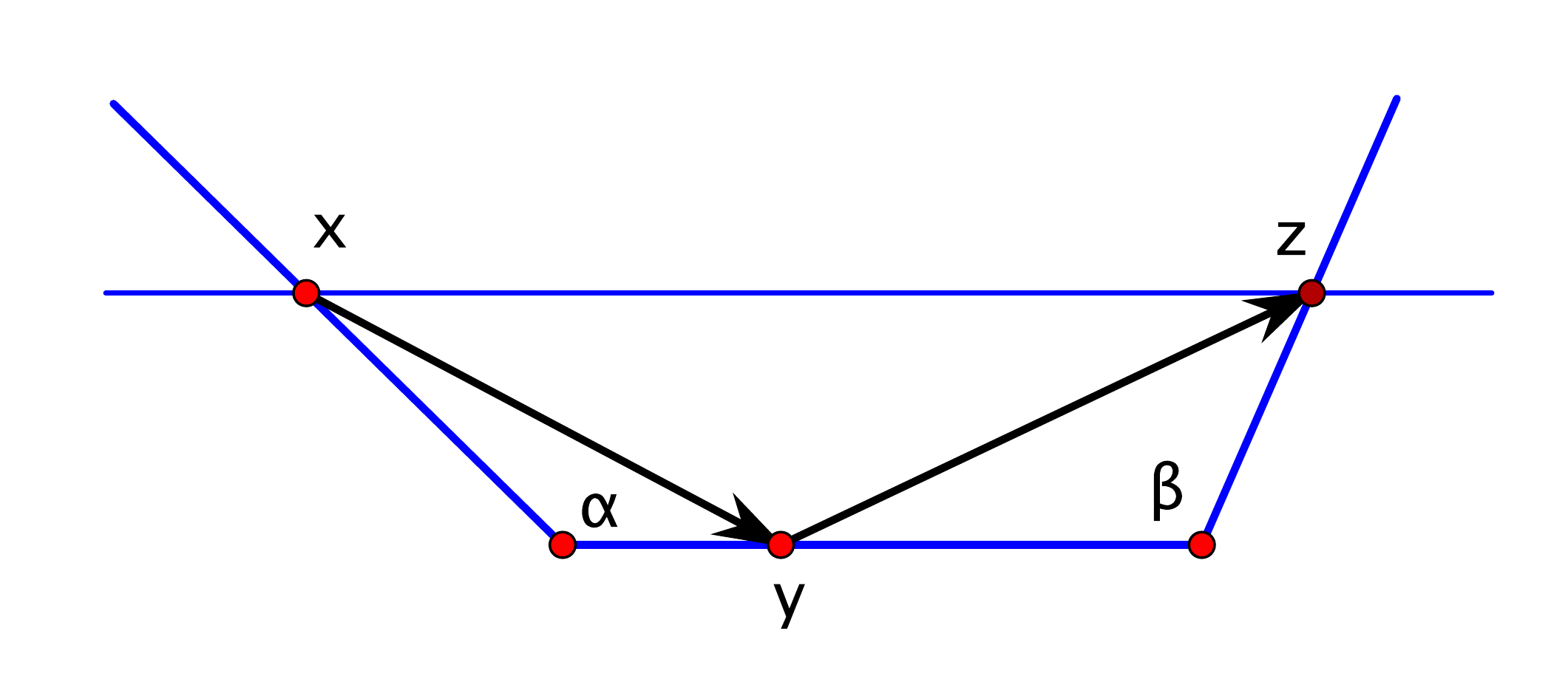}%
  \caption{\label{fig:distortion}
    Distortion of length under projection, see \cref{lm:area}.
  }
\end{figure}

\begin{proof}
Consider an instance of a reflection, \cref{fig:distortion}. The projection of the $i$th side on the $k$th side along the $j$th side reverses the orientation. This projection is an affine map~$x \mapsto z$ that does not depend on $y$, as long as $y$ stays on the $j$th side. The projection distorts the length by the ratio $\sin\alpha / \sin\beta$, namely, $\sin\alpha\ dx = -\sin\beta\ dz$. It follows that $\sin\alpha\ dx \wedge dy = \sin\beta\ dy \wedge dz$, as needed. 
\end{proof}

\begin{remark}\label{rmk:symplectic_billiard_is_an_indefinite_isometry}
{\rm
The phase space of polygonal symplectic billiards has a $T$-invariant area form~$\omega$ and a $T$-invariant 2-web of vertical and horizontal lines. These two structures determine a sign-indefinite quadratic form as follows.  Given a vector $V$, let $V_1$ and $V_2$ be its horizontal and vertical components with respect to the 2-web, and set $g(V)= \omega(V_1,V_2)$. 
We obtain a pseudo-Euclidean metric $g$, and it follows that  $T$ is a piecewise isometry relative to this pseudo-Euclidean metric. 
}
\end{remark}

We now give another interpretation of the phase area.
Denote by $-{\bf P}$ the reflection of ${\bf P}$ about the origin. Then the 
{\it difference body} $D({\bf P})$ of a convex body ${\bf P}$ is the Minkowski sum of ${\bf P}$ with $-{\bf P}$. In other words, the difference body is centered at the origin and is formed by the vectors that connect pairs of points of ${\bf P}$. For example, the difference body of a triangle is an affine-regular hexagon, and the difference body of a square is a square twice as large.

Let ${\bf P}$ be a convex plane polygon, the symplectic billiard table. This induces a map $f: \Phi_{\bf P} \to D({\bf P})$ that sends chords of ${\bf P}$ to points of its difference body. We equip the latter with the area form induced from that in the plane. 

\begin{lemma} \label{lm:diffbody}
The map $f$ is an area-preserving bijection of the interior of $\Phi_{\bf P}$ to an open dense subset of $D({\bf P})$.
\end{lemma}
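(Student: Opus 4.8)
The plan is to realize $f$ as the ``endpoint difference'' map and to reduce the statement to two computations: a Jacobian computation for area preservation and a topological count for the bijection. Concretely, I would parametrize $\partial\mathbf{P}$ by arc length via a map $\gamma$, so that the chord with tail $x$ on the side $P_iP_{i+1}$ and head $y$ on the side $P_jP_{j+1}$ is sent by $f$ to the vector $\gamma(y)-\gamma(x)$. This vector connects two points of $\mathbf{P}$, hence lies in $D(\mathbf{P})$, and restricted to each rectangle $v_i\times v_j$ the formula $f(x,y)=\gamma(y)-\gamma(x)$ is affine in $(x,y)$.

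For area preservation I would compute the differential of $f$ on a single rectangle $v_i\times v_j$. In the arc-length coordinates $(x,y)$ its two columns are $-\gamma'(x)$ and $\gamma'(y)$, the unit tangents to the two sides, so the Jacobian determinant equals $-[\gamma'(x),\gamma'(y)]=-\sin\alpha$, where $\alpha$ is the angle from $v_i$ to $v_j$. On $\Phi_{\mathbf{P}}$ we have $[v_i,v_j]>0$, i.e.\ $\sin\alpha>0$, so $f$ is a local diffeomorphism pulling back the Euclidean area form $du\wedge dv$ of the plane to $-\sin\alpha\,dx\wedge dy$, which matches in magnitude the phase area form $\sin\alpha\,dx\wedge dy$. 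Thus $f$ is area preserving wherever it is a bijection.

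The heart of the argument is injectivity together with the identification of the image. I would fix $w$ in the interior of $D(\mathbf{P})$ that is not parallel to any side. Finding chords with $f=w$ amounts to finding $x\in\partial\mathbf{P}$ with $x+w\in\partial\mathbf{P}$, i.e.\ points of $\partial\mathbf{P}\cap\partial(\mathbf{P}-w)$. Since $w\in\mathrm{int}\,D(\mathbf{P})$ and $w\neq 0$, the translates $\mathbf{P}$ and $\mathbf{P}-w$ overlap while neither contains the other (they have equal area), so their boundaries cross transversally in exactly two points $x_1,x_2$; these give exactly the two chords over $w$. Now I would invoke the topological fact that, traversing $\partial\mathbf{P}$ counterclockwise, one enters $\mathbf{P}-w$ at one crossing and exits at the other, so the two crossings carry opposite orientation. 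At $x_k$ the tangent to $\partial\mathbf{P}$ is the side direction $v_{i_k}$, while the tangent to $\partial(\mathbf{P}-w)$ is the side direction $v_{j_k}$ at the head $x_k+w$; hence the crossing sign is the sign of $[v_{i_k},v_{j_k}]$, and these signs are opposite for $k=1,2$. Therefore exactly one of the two chords lies in $\Phi_{\mathbf{P}}$, which proves injectivity of $f$ and shows that its image contains every such generic $w$.

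Finally, openness of the image follows from $f$ being a local diffeomorphism (the Jacobian never vanishes because the sides are not parallel) and injective, hence a homeomorphism onto an open set; density follows because the excluded vectors, namely those parallel to some side, the points of $\partial D(\mathbf{P})$, and the images of chords meeting a vertex, form a finite union of segments and is therefore nowhere dense. I expect the main obstacle to be the orientation bookkeeping in the two-crossing argument, together with the care needed in the polygonal (non-smooth) setting to confine all degeneracies, such as crossings at a vertex and edge-parallel directions where the two boundaries share a whole segment rather than crossing transversally, to this nowhere-dense exceptional set.
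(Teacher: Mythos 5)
Your proposal is essentially correct, but it takes a genuinely different route from the paper. The paper constructs the inverse of $f$ directly: given $w$ in the interior of $D(\mathbf{P})$ (minus a $1$-dimensional exceptional set), it represents $w$ by a segment inside $\mathbf{P}$, slides the oriented line carrying that segment to the right until the two boundary intersection points realize $w$, and then shows the resulting chord satisfies $[u,v]>0$ by a global sign-propagation argument: $[u,v]\neq 0$ everywhere, the punctured interior of $D(\mathbf{P})$ is connected, so the sign is constant and can be checked in one example. You instead count all preimages at once: the preimages of a generic $w$ in the full space of oriented chords are the points of $\partial\mathbf{P}\cap\partial(\mathbf{P}-w)$, and since the algebraic intersection number of two closed curves in the plane vanishes, the two transversal crossings carry opposite signs $[v_{i_k},v_{j_k}]$, so exactly one preimage lies in $\Phi_{\mathbf{P}}$. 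This gives injectivity and surjectivity onto the generic set simultaneously, and replaces the paper's connectedness argument by a local orientation computation; what the paper's version buys in exchange is an explicit inverse map and an argument that works verbatim for smooth strictly convex bodies before being adapted to polygons. The area-preservation computation is the same in both (and shares the same harmless sign ambiguity).

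There is one step you assert without adequate justification: that $\partial\mathbf{P}$ and $\partial(\mathbf{P}-w)$ cross in \emph{exactly two} points. The reason you offer --- the bodies overlap and neither contains the other --- is not a valid inference for general convex bodies: two congruent squares rotated $45^\circ$ relative to each other overlap, neither contains the other, and their boundaries meet in eight points. What makes the claim true here is that the two bodies are \emph{translates} of one another. The clean argument: points of $\partial\mathbf{P}\cap\partial(\mathbf{P}-w)$ are exactly the tails of chords of $\mathbf{P}$ equal to $w$; for $w$ not parallel to any side these are full chords in the direction of $w$, the length of the chord cut by a line in that direction is a concave function of the transverse parameter, and $w\in\mathrm{int}\,D(\mathbf{P})$ means this concave function exceeds $|w|$ somewhere, so its level set $\{L=|w|\}$ consists of exactly two lines, each contributing one crossing. (This is the same convexity input the paper invokes when it asserts that the ``moment'' in its moving-line construction is unique.) Finally, a point in your favor that you can make explicit: the degenerate case where the two boundaries share a whole segment occurs precisely when the chord joins two parallel sides of $\mathbf{P}$, and then $\langle w,\nu\rangle$ equals the width of $\mathbf{P}$ in the common normal direction $\nu$, so $w\in\partial D(\mathbf{P})$ automatically; hence your exceptional set already absorbs this degeneration, not only the edge-parallel vectors you mention.
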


\begin{proof} Let us construct an inverse of the map $f$ on an open dense part of $D({\bf P})$.  For this we assume for the moment that ${\bf P}$ is strictly convex with smooth boundary $\gamma$, oriented counter-clockwise. An affine diameter of ${\bf P}$ is a chord of $\gamma$ with parallel tangent lines at its end points.

Let vector $w$ be a nonzero vector in the interior of $D({\bf P})$. We would like to represent $w$ as the vector $AB$, where  $A\neq B$ are in the interior of ${\bf P}$. If one, or both, points $A, B$ lie on $\gamma$, one can parallel-translate the segment $AB$ so that both points are inside ${\bf P}$. Indeed, the only situation when such a translation does not exist, is when $AB$ is an affine diameter of ${\bf P}$, as illustrated in \cref{convex}. But, then $w$ would lie on the boundary of the difference body, the case that we already excluded.

Consider the oriented line through $AB$ and move it to the right (with respect to the orientation of the plane). By continuity and strict convexity, there will be a unique moment when the intersection points, $A',B'$, of the moving line with $\gamma$  are such that $A'B'=w$. Let $u$ and $v$ be the oriented tangent vectors of $\gamma$ at $A'$ and $B'$.  We claim that $[u,v] \neq 0$. Indeed, if $[u,v] = 0$, then either $v=u$ or $v=-u$. In the former case, $B'=A'$ and $w=0$, and in the latter case, $A'B'=AB$ is an affine diameter. Both case are  excluded. The punctured interior of $D({\bf P})$ is connected, hence $[u,v]$ has a constant sign, and it is easy to see that it is positive (see \cref{convex}). Thus, $A'B' \in \Phi_{\bf P}$, and the map $w \mapsto A'B'$ is the inverse of $f$.

\begin{figure}[t]
\centering%
\includegraphics[width=1.8in]{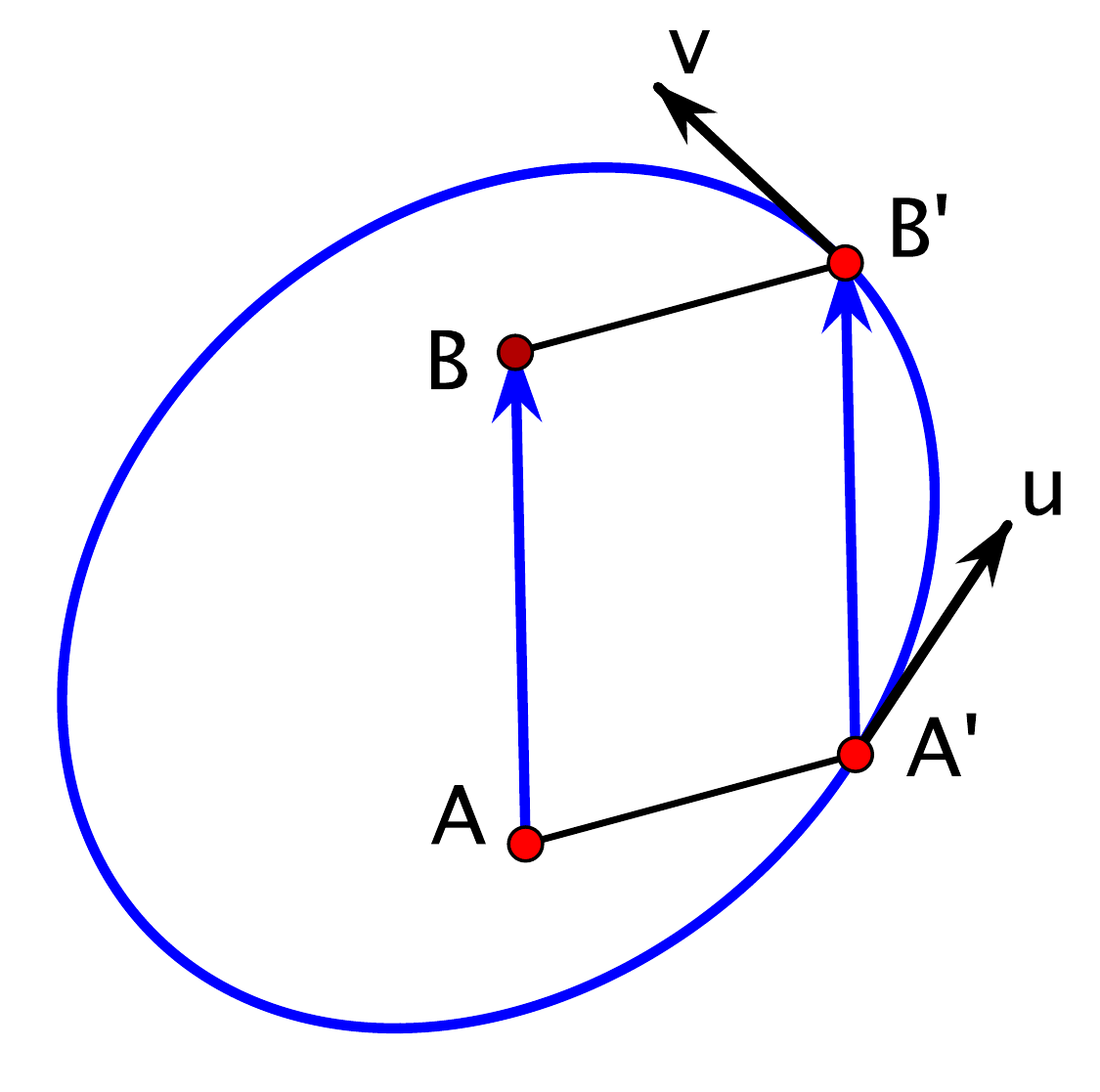}%
\caption{\label{convex}
Moving a vector to the boundary.
}
\end{figure}

If ${\bf P}$ is a convex polygon, the same construction applies with the following adjustments. In addition to the boundary of  $D({\bf P})$ and the origin, we also remove from it the vectors that are equal to $tv_i$ for some index~$i$ and $t\in[0,1]$, the vectors whose endpoints are on parallel sides of ${\bf P}$ (if any), and the vectors for which point $A'$ or $B'$ is a vertex of the polygon. These sets are 1-dimensional, and $f$ is a bijection of the interior of $\Phi_{\bf P}$ to their complement.

Concerning the area-preserving property, let $x$ and $y$ be arc length parameters on the sides on which points $A'$ and $B'$ lie, and let $u$ and $v$ be the unit orienting vector along these sides, respectively. Then, locally, the map $f$ is given by $(x,y) \mapsto yv-xu \in {\R}^2$. The induced area form is $[u,v]dx \wedge dy$, as needed.
\end{proof}

\begin{remark}
{\rm
The total phase space area of the symplectic billiard in a strictly convex plane domain with smooth boundary equals the area of its difference body, \cite{AT}. The proof involves manipulations with the support function of the body.
}
\end{remark}

\subsection{Symbolic dynamics and tiles}$ $\\[1ex]
As before, we label the sides of the polygon $1,\ldots, n$, and assign to each orbit of the symplectic billiard map $T$ its symbolic orbit, the bi-infinite sequence of the labels of the sides that are visited by the orbit. A periodic billiard orbit has a periodic symbolic orbit. 

Define a {\it tile} as the set of phase points with the same periodic symbolic orbit. 
The {\it discontinuity set} consists of the the phase points for which some iteration of $T$, in the future or in the past, is not defined, that is, whose orbit ends up at a vertex or starts at a vertex of the polygon. The set of chords, one of whose endpoints is a vertex and another lies on a side, is either a horizontal or a vertical segment. \cref{lm:area} implies that the discontinuity set is a union of horizontal and vertical segments. Thus, its complement is the union of  tiles. 

\begin{lemma} \label{lm:connected}
The tiles are phase rectangles, possibly degenerate (segments or points). In particular, every tile is connected. \\
If a tile is a genuine rectangle, that is, has a positive phase area, then its symbolic orbit is periodic. Furthermore, 
every orbit in this tile is periodic. More precisely, let $M$ be a tile of positive area with a periodic symbolic orbit of period $n$. Then $T^n$ maps $M$ to itself, and the return map $T^n$ has either order 4, or order 2, or it is the identity.
\end{lemma}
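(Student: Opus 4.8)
The plan is to prove the three assertions in order: first that each tile is a phase rectangle (hence connected), then that positive area forces periodicity of the symbolic orbit and of every orbit in the tile, and finally that the return map $T^n$ has order dividing $4$.

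\emph{Tiles are rectangles.} By \cref{lm:area}, on each rectangle $v_i\times v_j$ the map $T$ has the form $(x,y)\mapsto(y,ax+b)$, so each iterate $T^m$, wherever it is defined by a fixed itinerary, is an affine map whose coordinates depend affinely on $(x,y)$. A tile is the set of points sharing one fixed bi-infinite itinerary; a point lies in this tile exactly when, for every $m$, the image $T^m(x,y)$ lands in the prescribed rectangle $v_{i_m}\times v_{i_{m+1}}$. Each such constraint is a conjunction of inequalities of the form ``the first coordinate of $T^m(x,y)$ lies in an interval'' and ``the second coordinate lies in an interval.'' First I would observe, using the skew form $(x,y)\mapsto(y,ax+b)$, that the first coordinate of $T^m(x,y)$ depends only on the initial data through an affine function of a \emph{single} variable (it is $y$ after one step, then $ax+b$, etc.), so that each individual constraint cuts out a horizontal strip, a vertical strip, or a genuine axis-aligned rectangle in the $(x,y)$-square. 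The discontinuity-set discussion preceding the lemma already records that the bad sets (orbits hitting a vertex) are horizontal and vertical segments, so a tile is an intersection of horizontal and vertical half-planes, i.e.\ an axis-aligned rectangle, possibly degenerate. This gives connectedness and the rectangle claim simultaneously.

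\emph{Positive area implies periodicity.} Suppose the tile $M$ has positive area. Its itinerary is a bi-infinite symbolic sequence over the finite alphabet $\{1,\dots,n\}$ together with a sign/ordering datum. The key finite-type input is that there are only finitely many rectangles $v_i\times v_j$, and that $T$ restricted to $M$, being a composition of the affine maps of \cref{lm:area}, is itself affine on $M$. A nonperiodic symbolic sequence would force the rectangles $T^m(M)$ to be pairwise distinct for infinitely many $m$; but they all have the same positive area (since $T$ is area preserving by \cref{lm:area}) and all lie inside the fixed-area phase space $\Phi_{\bf P}$, which has only finitely many rectangles $v_i\times v_j$ to occupy. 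I would make this precise by noting that the itinerary must eventually repeat a pair of consecutive symbols in the same rectangle with the images overlapping in positive measure, and then argue that overlap of two affine images of $M$ on a set of positive measure forces the affine maps to coincide, hence the orbit of the rectangle is periodic. Periodicity of the symbolic orbit then lets us define the return map $T^n\colon M\to M$.

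\emph{The return map has order dividing $4$.} Now $T^n|_M$ is an orientation-, area-preserving affine bijection of the rectangle $M$ to itself. Relative to the pseudo-Euclidean structure of \cref{rmk:symplectic_billiard_is_an_indefinite_isometry}, $T$ is a piecewise isometry preserving the horizontal/vertical $2$-web; hence $T^n$ is a linear isometry of $M$ that either preserves each of the two web-directions or swaps them. Combined with the fact that $T^n$ is an area-preserving affine self-map of a nondegenerate rectangle fixing its center, the linear part lies in the finite symmetry group of the rectangle-with-web, namely the dihedral-type group generated by the coordinate swap and the sign reversals that are compatible with area and orientation preservation. I expect the main obstacle to be \emph{this last structural step}: one must rule out, e.g., shears or irrational rotations by exploiting that the map permutes a finite family of rectangles isometrically and fixes the center of $M$, and then identify the only surviving possibilities as a quarter-turn (order $4$), a half-turn (order $2$), or the identity. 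Concretely I would diagonalize the linear part of $T^n$ using the two invariant web-directions: if the directions are preserved it acts as $(\xi,\eta)\mapsto(\lambda\xi,\lambda^{-1}\eta)$, and boundedness of the orbit inside $\Phi_{\bf P}$ (or the isometry property) forces $\lambda=\pm1$, giving order $\le 2$; if the directions are swapped, the square of the map preserves them and the same argument applies to $(T^n)^2$, yielding order dividing $4$. Closing off the ``swap'' case cleanly, so that order exactly $4$ rather than some larger even order occurs, is where I would spend the most care.
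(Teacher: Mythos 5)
Your overall strategy tracks the paper's own proof quite closely: tiles are intersections of horizontal/vertical strips (hence rectangles), a pigeonhole argument via area preservation and finiteness of the total phase area yields recurrence, and the return map is analyzed as an affine automorphism of a rectangle. But your middle step contains a genuine error. The claim that ``overlap of two affine images of $M$ on a set of positive measure forces the affine maps to coincide'' is false, and it in fact contradicts the conclusion of the lemma itself: for the 10-periodic tile $M=AB\times BC$ of the \quadex (\cref{thm:quad}) one has $T^{10}(M)=M$, i.e.\ total overlap with $T^0(M)$, yet $T^{10}|_M$ is a half-turn, not the identity, so the two affine maps do \emph{not} coincide. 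If your claim were true, every return map would be the identity, which is wrong. Relatedly, your earlier assertion that a nonperiodic itinerary makes the rectangles $T^m(M)$ pairwise \emph{distinct} gives no contradiction with finite total area by itself: infinitely many distinct rectangles of equal positive area can coexist in a region of finite area if they are allowed to overlap. What is missing in both places is the observation the paper relies on: each phase point has a unique symbolic coding, so distinct tiles are pairwise \emph{disjoint}, and $T$ carries the tile of a coding to the tile of the shifted coding. Granting this, pigeonhole gives $i>j$ with $T^i(M)\cap T^j(M)\neq\emptyset$; since both are tiles, they are equal as \emph{sets}, whence $T^{i-j}(M)=M$ and the coding is $(i-j)$-periodic. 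Equality of sets, not of maps, is all that is needed and all that is true.

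Your final step, once repaired upstream, can be made to work, but your worries about shears, irrational rotations, and ``larger even orders'' dissolve under the paper's simpler observation: an affine automorphism of a nondegenerate rectangle must permute its four vertices preserving adjacency, hence (after conjugating the rectangle to a square) lies in the order-8 symmetry group of the square, and the orientation-preserving ones form the cyclic group generated by a quarter-turn. In your web formulation the same conclusion falls out cleanly: if $T^n|_M$ preserves the two directions, its linear part is diagonal, and mapping $M$ onto $M$ forces both eigenvalues to be $\pm1$, so orientation preservation leaves only the identity or a half-turn; if it swaps the directions, orientation preservation makes the square of its linear part equal to $\lambda\mu\,\mathrm{Id}$ with $\lambda\mu<0$, and preservation of $M$ forces $\lambda\mu=-1$, so the square is exactly a half-turn and the order is exactly $4$. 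No larger even order can occur, so the extra care you anticipated there is not actually needed.
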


\begin{proof}
Let $(\ldots, i_0, i_1,i_2, \ldots)$ be a symbolic orbit. The phase points with the symbolic coding $(i_0,i_1)$ form the set $v_{i_0}\times v_{i_1}$, the points with the coding $(i_0,i_1,i_2)$ form the set $(v_{i_0}\times v_{i_1}) \cap T^{-1}(v_{i_1}\times v_{i_2})$, and so on. The preimages and images of rectangles with vertical and horizontal sides are rectangles with vertical and horizontal sides, and the intersection of a finite number of such rectangles is again a rectangle of this kind, cf.\ \cref{lm:area}. An infinite intersection is still a rectangle, possibly a degenerate one. 

Assume that a tile $M$ has positive area. Since the total phase area is finite and $T$ is area preserving, there exist $i>j$ such that the tiles $T^i(M)$ and $T^j(M)$ intersect. Hence, $T^{i-j}(M)$ intersects $M$, and since $M$ is a tile, it follows that $T^{i-j}(M)=M$. Therefore the symbolic orbit of $M$ is  $(i-j)$-periodic. The tile $M$ is a rectangle, and the return map $T^{i-j}$ is an orientation-preserving affine isomorphism of this rectangle. Hence, this map is conjugated to a rotation of a square through an angle that is a multiple of $\pi/2$, that is, either $\pi/2$ (order 4), or $\pi$ (order 2), or $2\pi$ (the identity).
\end{proof}

\subsection{Periodic trajectories.} $ $\\[1ex] 
Call a periodic trajectory in a polygon ${\bf P}$ {\it stable} if this trajectory persists under every sufficiently small perturbation of ${\bf P}$. For example, the 3-periodic orbits in a triangle that connects the mid-points of its sides is stable, whereas a  4-periodic trajectory in a square is not stable: it can be destroyed by an arbitrary small perturbation of the square.  

A periodic trajectory is called {\it isolated} if its tile has zero area. 

\begin{proposition} \label{prop:periodic}
An isolated periodic orbit is stable. In addition,
\begin{itemize}
\item if $n$ is odd, then an $n$-periodic orbit is stable. However, it is never isolated, and the return map to its tile has order 4;
\item if $n$ is even but not divisible by four, then an $n$-periodic orbit is stable. If it is not isolated, then the return map to its tile has order 2;
\item if $n$ is divisible by four and the quotient $\lambda$, (which is defined in \eqref{eq:linearized_return_map_case_n=4k} in the proof below) is different from 1,  
then the respective periodic orbit is stable. If $\lambda =1$, then the return map of the respective tile is the identity. In this case, as indicated by our numerical experiments, the orbit may be stable or unstable.
\end{itemize}
\end{proposition}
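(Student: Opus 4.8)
The plan is to compute the linearized return map $A=D(T^{n})_{p^\ast}$ at the periodic point $p^\ast$ and to read off both stability and the order of the return map from its eigenvalues. By \cref{lm:area}, one step of the billiard, written in the arc-length coordinates, has derivative
\begin{equation}\label{eq:step_jacobian}
M_m=\begin{pmatrix}0&1\\ a_m&0\end{pmatrix},\qquad a_m=-\frac{\sin\theta_m}{\sin\theta_{m+1}},
\end{equation}
where $\theta_m\in(0,\pi)$ is the angle between the directions of the sides visited at times $m$ and $m+1$, so that $\sin\theta_m>0$ and $a_m<0$, and indices are read cyclically with $\theta_{m+n}=\theta_m$. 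Hence $A=M_{n-1}\cdots M_1M_0$. Two elementary facts drive everything: a product of anti-diagonal $2\times2$ matrices is diagonal when the number of factors is even and anti-diagonal when it is odd; and the factors telescope, $\prod_{m=0}^{n-1}a_m=(-1)^n\prod_{m}\frac{\sin\theta_m}{\sin\theta_{m+1}}=(-1)^n$.

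First I would record the explicit form of $A$ by parity. For $n$ odd, $A$ is anti-diagonal and $A^2=\bigl(\prod_m a_m\bigr)\mathrm{Id}=-\mathrm{Id}$, so $A$ has order $4$ and $\operatorname{tr}A=0$. For $n=2k$ even, $A=\operatorname{diag}(P,Q)$ with $P=\prod_{m\ \mathrm{even}}a_m$ and $Q=\prod_{m\ \mathrm{odd}}a_m$; grouping the sines shows $P=(-1)^k\lambda$ and $Q=(-1)^k\lambda^{-1}$, where
\begin{equation}\label{eq:linearized_return_map_case_n=4k}
\lambda=\frac{\sin\theta_0\,\sin\theta_2\cdots\sin\theta_{2k-2}}{\sin\theta_1\,\sin\theta_3\cdots\sin\theta_{2k-1}}>0 .
\end{equation}
Thus $A=\operatorname{diag}(-\lambda,-\lambda^{-1})$ when $n\equiv2\pmod4$ and $A=\operatorname{diag}(\lambda,\lambda^{-1})$ when $n\equiv0\pmod4$; in all cases $\det A=1$, as it must be.

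Next I would treat persistence by the implicit function theorem. A periodic orbit is a fixed point $p^\ast$ of $T^{n}$ lying in the interiors of the visited sides; for a sufficiently small perturbation of ${\bf P}$ the same itinerary keeps $T^{n}$ a smooth (indeed affine) function of the phase point and of the polygon, with $T^{n}(p^\ast)=p^\ast$. The fixed point persists provided $A-\mathrm{Id}$ is invertible, i.e. $1\notin\operatorname{spec}A$, i.e. (since $\det A=1$) $\operatorname{tr}A\neq2$. Because $A$ is diagonal or anti-diagonal, one checks directly that $\operatorname{tr}A=2$ forces $A=\mathrm{Id}$: for $n$ odd $\operatorname{tr}A=0$; for $n\equiv2$, $\operatorname{tr}A=-(\lambda+\lambda^{-1})<0$; for $n\equiv0$, $\operatorname{tr}A=\lambda+\lambda^{-1}\ge2$ with equality only at $\lambda=1$, where $A=\mathrm{Id}$. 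This yields the three bulleted stability statements, except in the single degenerate case $A=\mathrm{Id}$ ($n\equiv0$, $\lambda=1$), where the return map has identity linear part and hence, having a fixed point, is the identity on its tile.

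It remains to connect the eigenvalue data to the tile, which is where the real work lies. The routine half is the case $n\equiv2\pmod4$: if the tile has positive area then \cref{lm:connected} forces the return map to have finite order, but $A=\operatorname{diag}(-\lambda,-\lambda^{-1})$ has finite order only when $\lambda=1$, giving $A=-\mathrm{Id}$ of order $2$. The genuinely delicate point, which I expect to be the main obstacle, is that an odd orbit is \emph{never} isolated: I must rule out a degenerate (point or segment) tile using only $A^2=-\mathrm{Id}$, since eigenvalue bookkeeping alone does not control the $2$-dimensional extent of the tile. The idea is that the relation $A^2=-\mathrm{Id}$ makes $A$ preserve a one-parameter family of axis-aligned rectangles centered at $p^\ast$; choosing such a rectangle small enough that the itinerary is constant on it over one full period (a finite open condition satisfied strictly by $p^\ast$), the identity $T^{n}(\text{rectangle})=\text{rectangle}$ propagates constancy of the itinerary to all times, so the rectangle lies inside the tile and the tile has positive area, with return map of order $4$. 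The same rectangle-invariance-and-propagation argument disposes of the leftover case $A=\mathrm{Id}$ (every small rectangle is then invariant), showing that an isolated orbit can never have $A=\mathrm{Id}$ and thereby completing the claim that an isolated periodic orbit is stable.
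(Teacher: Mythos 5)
Your proof is correct and takes essentially the same route as the paper's: your anti-diagonal one-step Jacobians from \cref{lm:area}, the telescoping sine products, and the parity case analysis are precisely the paper's length-distortion computation along the even- and odd-indexed point chains, and in both arguments stability comes from persistence of a fixed point whose linearization does not have $1$ as an eigenvalue. The only notable presentational difference is in the ``never isolated'' claim for odd $n$: where you build $T^n$-invariant axis-aligned rectangles and propagate the itinerary, the paper observes directly that $T^{2n}$ acts as a point reflection on each of the two sides, so every nearby chord is $4n$-periodic --- the same fact $A^2=-\mathrm{Id}$ in geometric form.
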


\begin{remark}
{\rm
The proof of \cref{prop:periodic} relies on the simple fact that an orbit is stable, if the differential of the return map does not have 1 as an eigenvalue. In our case this map is a composition of rather explicit affine transformations which leads to the description in \cref{prop:periodic}.
}	
\end{remark}

\begin{proof}
Let $x_1,\ldots,x_n$ be a periodic trajectory. For every $i$, the side $L_i$ containing point $x_i$ is parallel to  $x_{i-1},x_{i+1}$.  Let $\alpha_i$ be the angle between $L_i$ and $L_{i+1}$. 

Let $n$ be odd, and let $y_1 y_2$ be a chord sufficiently close to $x_1 x_2$. 
We trace the evolution of odd-numbered and even-numbered points $y_i$ separately.
After $n$ reflections, $y_1$ returns to the line $L_1$ as $y_{1+n}$ and $y_2$ returns to the line $L_2$ as $y_{2+n}$ and similarly after $2n$ reflections, $y_1$ returns to the line $L_1$ as $y_{1+2n}$ and $y_2$ returns to the line $L_2$ as $y_{2+2n}$. Each of these return maps reverses the orientation and preserves the length, because the distortion of the length  equals
$$
\frac{\sin \alpha_1}{\sin \alpha_2} \frac{\sin \alpha_3}{\sin \alpha_4} \cdots \frac{\sin \alpha_{2n-1}}{\sin \alpha_{2n}} = 1
$$
(every angle appears twice, once in the numerator and once in the denominator). That is, the return maps to the sides $L_1$ and $L_2$ are the reflection in points $x_1$ and $x_2$, respectively. See \cref{parity}.

\begin{figure}[t]
\centering%
\includegraphics[width=3.2in]{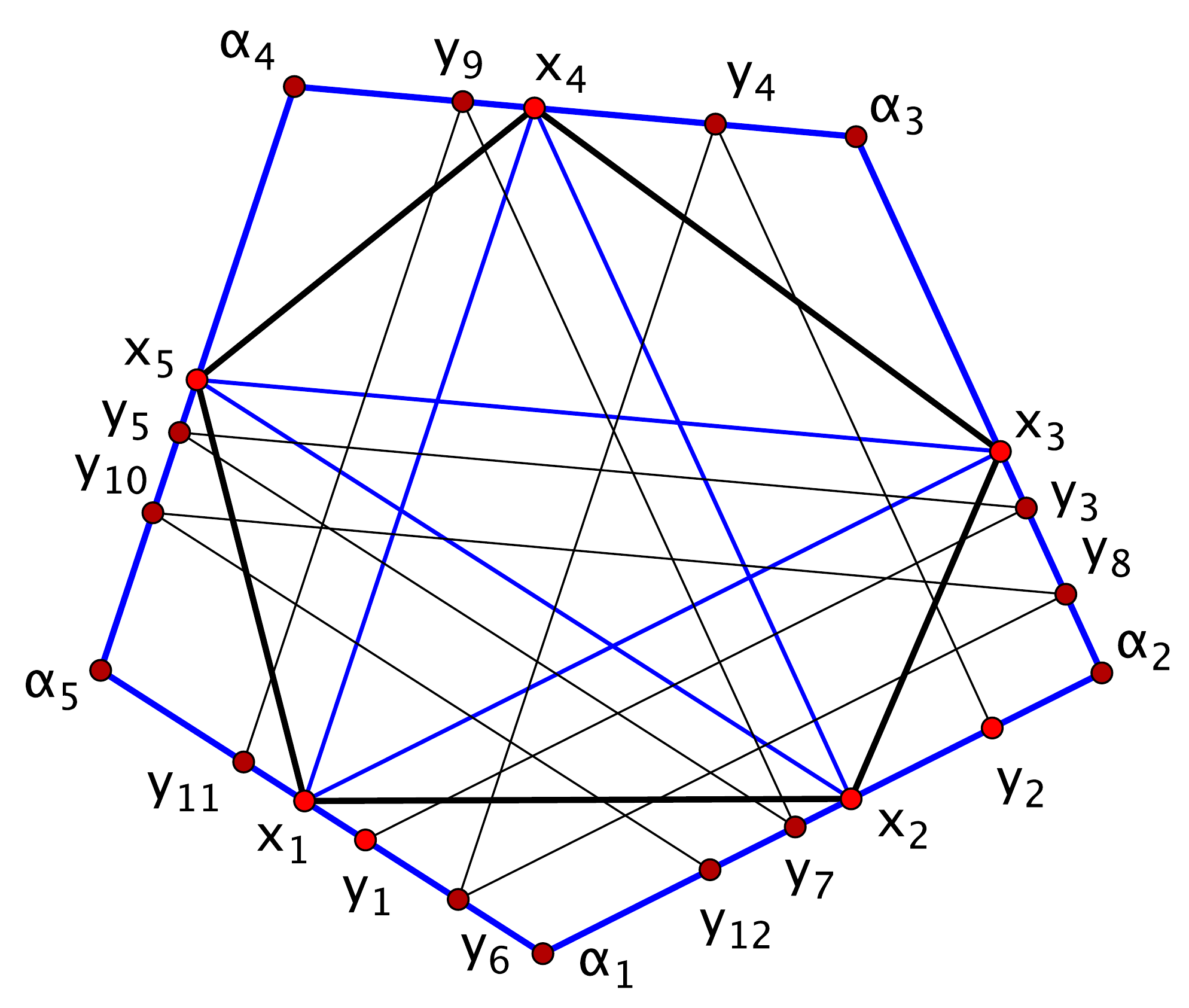} %
\caption{\label{parity}
A 5-periodic trajectory $x_1,\ldots,x_5$ and a nearby trajectory $y_1,y_2,\ldots$. Points $y_1$ and $y_{11}$ are symmetric with respect to point $x_1$, and points $y_2$ and $y_{12}$ are symmetric with respect to point $x_2$.
}
\end{figure}

It follows that the phase point $y_1 y_2$ is $4n$-periodic. These phase points form a tile, this tile returns to itself after $n$ iterations, and order of the return map $T^n$ is four. The phase point $x_1 x_2$ is the center of this tile, and it is
a hyperbolic fixed point of $T^n$ with the eigenvalues $\pm\sqrt{-1}$. A small perturbation of a polygon does not destroy such a fixed point, hence the perturbed polygon also has an $n$-periodic symplectic billiard trajectory.

Now let $n=2m$ be even. Arguing in the same way, the first return of point $y_1$ to line $L_1$ occurs after $n$ reflections, and likewise for point $y_2$. 

The distortion of the length on $L_1$ equals
\begin{equation} \label{eq:linearized_return_map_case_n=4k}
\frac{\sin \alpha_1}{\sin \alpha_2} \frac{\sin \alpha_3}{\sin \alpha_4} \cdots \frac{\sin \alpha_{n-1}}{\sin \alpha_n} =: \lambda,
\end{equation}
and for $L_2$, the distortion equals $1/\lambda$. The orbit is not isolated if and only if $\lambda=1$, cf.\ \cref{lm:area}.

If $m$ is odd, each of these return maps reverses the orientation of the line, and is a homothety with a negative coefficient. Hence the fixed point persists under a sufficiently small perturbation of the polygon.
It follows that the $n$-periodic orbit is stable. If the orbit is not isolated, and $\lambda=1$, then the return map of the respective tile is a reflection in a point, that has order two. 

If $n$ is a multiple of four and $\lambda \neq 1$, then the respective periodic orbit is hyperbolic, with one attracting and one repelling direction. Therefore it is stable. If $\lambda =1$, then the return map of the respective tile is the identity. The orbit may be either stable or unstable.
\end{proof}

\section{The \quadex} \label{sect:quad}

The fist periodic polygonal symplectic billiard table that we discovered is a quadrilateral that we call {\it the \quadex}, see \cref{quad_space}. All phase points are periodic with two periods, 20 and 36. See \cref{quad_space} for the phase space colored according to period. We now analyze the dynamics.   


\begin{figure}[t]
  \newcommand{\figheight}{0.33\linewidth}%
  \centering%
  \subfloat[\label{fig:quad_space-config}]{%
    \includegraphics[height=\figheight]{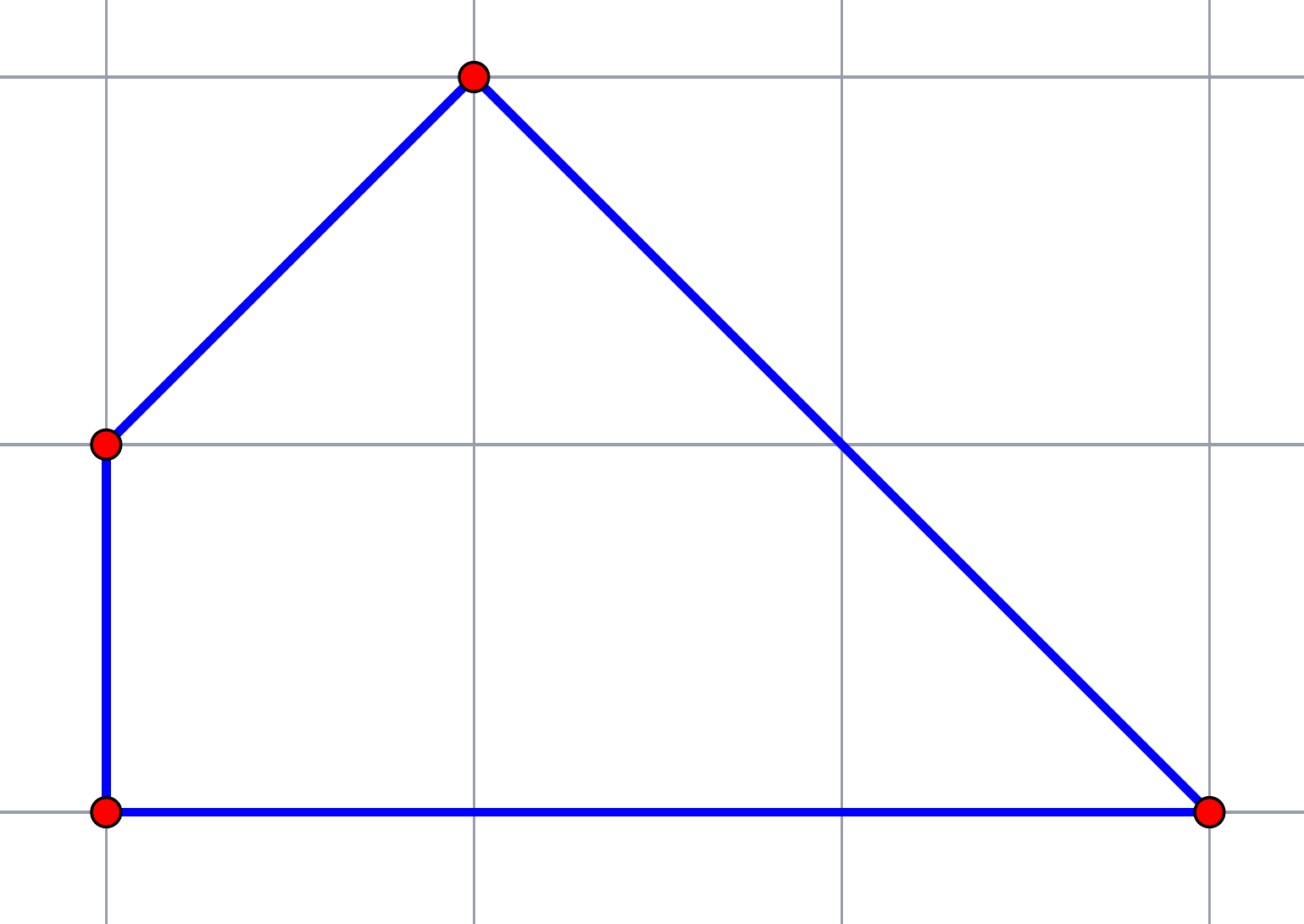}%
  }%
  \hfil%
  \subfloat[\label{fig:quad_space-phase}]{%
    \includegraphics[height=\figheight,trim={0 0 2pt 0},clip]{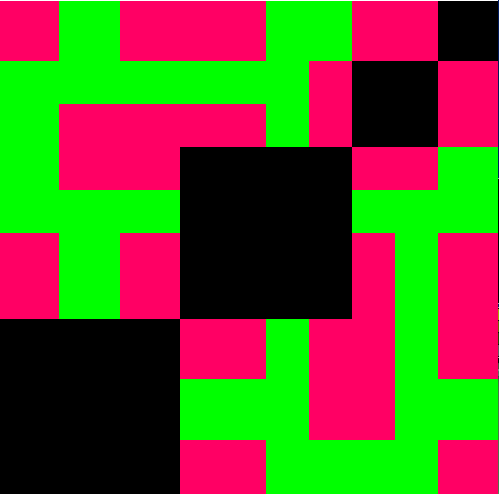}%
  }%
  \caption{\label{quad_space}
    The \quadex in configuration space~\protect\subref{fig:quad_space-config} and its phase space~\protect\subref{fig:quad_space-phase}. Green parts of the phase space are 36-periodic, and the red ones 20-periodic. 
  }
\end{figure}

\begin{theorem} \label{thm:quad}
All orbits in the \quadex are periodic with periods 20 and 36. The structure of the orbits of the periodic tiles is as follows.
\begin{itemize}
\item One orbit consisting of the tiles that return to themselves after 10 iterations, with the return map having 
order 2.
\item One orbit consisting of the tiles that return to themselves after 9 iterations, with the return map having 
order 4.
\end{itemize}
\end{theorem}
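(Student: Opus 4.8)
The plan is to turn \cref{thm:quad} into a finite, exact computation on the piecewise-affine phase space and then read off the periods from the general results of \cref{sect:gen}. First I would fix explicit coordinates for the \quadex, using affine invariance to place three of its vertices at convenient rational points, so that the side vectors $v_1,\dots,v_4$ and all cross products $[v_i,v_j]$ are rational. By \cref{lm:area} the map $T$ acts on each rectangle $v_i\times v_j$ by $(x,y)\mapsto(y,ax+b)$ with $a=-\sin\alpha/\sin\beta$ and $b$ determined by the triple $(i,j,k)$; parametrizing each side proportionally to $v_i$ makes all these coefficients rational. Consequently every orbit computation below is carried out in exact arithmetic over $\Q$, which promotes the numerical phase portrait of \cref{quad_space} to a rigorous statement.

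Next I would construct the symbolic partition and enumerate the tiles in the full phase space. Within a rectangle $v_i\times v_j$ the next side $k$ is determined by which subinterval the value $z=ax+b$ falls into; since $z$ depends only on $x$ (by \cref{lm:area}), the finitely many thresholds where $z$ passes a vertex of ${\bf P}$ cut $v_i\times v_j$ into subrectangles of constant one-step itinerary by vertical lines. Refining this partition under forward and backward iterates and invoking \cref{lm:connected} (each tile is a rectangle, and a positive-area tile is periodic), I would list all tiles explicitly. The assertion to verify is that the positive-area tiles fall into exactly two $T$-cycles: one cycle of $10$ tiles with symbolic period $10$, and one cycle of $9$ tiles with symbolic period $9$. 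For each cycle I would write down the repeating side word and evaluate the product $\lambda$ of sine ratios from \eqref{eq:linearized_return_map_case_n=4k} along the orbit.

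The return orders then follow from \cref{prop:periodic}. The symbolic period $9$ is odd, so $T^{9}$ is automatically an order-$4$ map of its tile (with $\lambda=1$ forced by telescoping), and the generic orbit there has genuine period $9\cdot4=36$. The symbolic period $10=2m$ has $m=5$ odd; checking $\lambda=1$—equivalently, that these tiles have positive area and the orbits are not isolated—puts us in the even-but-not-divisible-by-four case, so $T^{10}$ is an order-$2$ map and the generic orbit has period $10\cdot2=20$. To conclude that \emph{every} orbit is periodic, I would carry out the area bookkeeping: the $\omega$-areas of the tiles in the two cycles must sum to the total phase-space area. Since \cref{lm:connected} identifies the complement of the tiles with the discontinuity set—a finite union of horizontal and vertical segments, whose points have orbits that terminate at a vertex and are therefore not bi-infinite—a matching area total forces every bi-infinite orbit to lie in a positive-area tile, hence to be $20$- or $36$-periodic.

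The main obstacle is precisely this completeness step: a priori the refinement of the symbolic partition might not terminate, or the enumerated tiles might leave an uncovered positive-area remainder concealing further tiles. The exact area identity is the certificate that closes this gap, so the crux is organizing the finite computation so that the tile areas are computed exactly and seen to sum to the total. A secondary point to record is that the center of each tile is the unique fixed point of its order-$2$ (resp.\ order-$4$) return map, giving one exceptional orbit of period $10$ (resp.\ $9$); I would locate these central chords explicitly and note their status, since the clean dichotomy of periods $20$ and $36$ describes the orbits filling the open positive-area tiles.
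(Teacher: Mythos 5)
Your proposal is correct and follows essentially the same route as the paper: trace the orbits of explicit phase rectangles, verify they form one 10-cycle and one 9-cycle of tiles that are never split by discontinuity lines, and close the argument by checking that the tile areas (10 and 9 units) exhaust the total phase area of 19. The only cosmetic difference is that you derive the return-map orders (2 for the even cycle, 4 for the odd one) from \cref{prop:periodic}, whereas the paper verifies them directly by following a small segment through the cycle; both are valid.
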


\begin{proof}
Consider \cref{quad_marked}. We will describe the evolution of two phase rectangles under the symplectic billiard map $T$, the rectangles $AB\times BC$ and $AB\times CE$.

\begin{figure}[t]
\centering%
\includegraphics[width=2.7in]{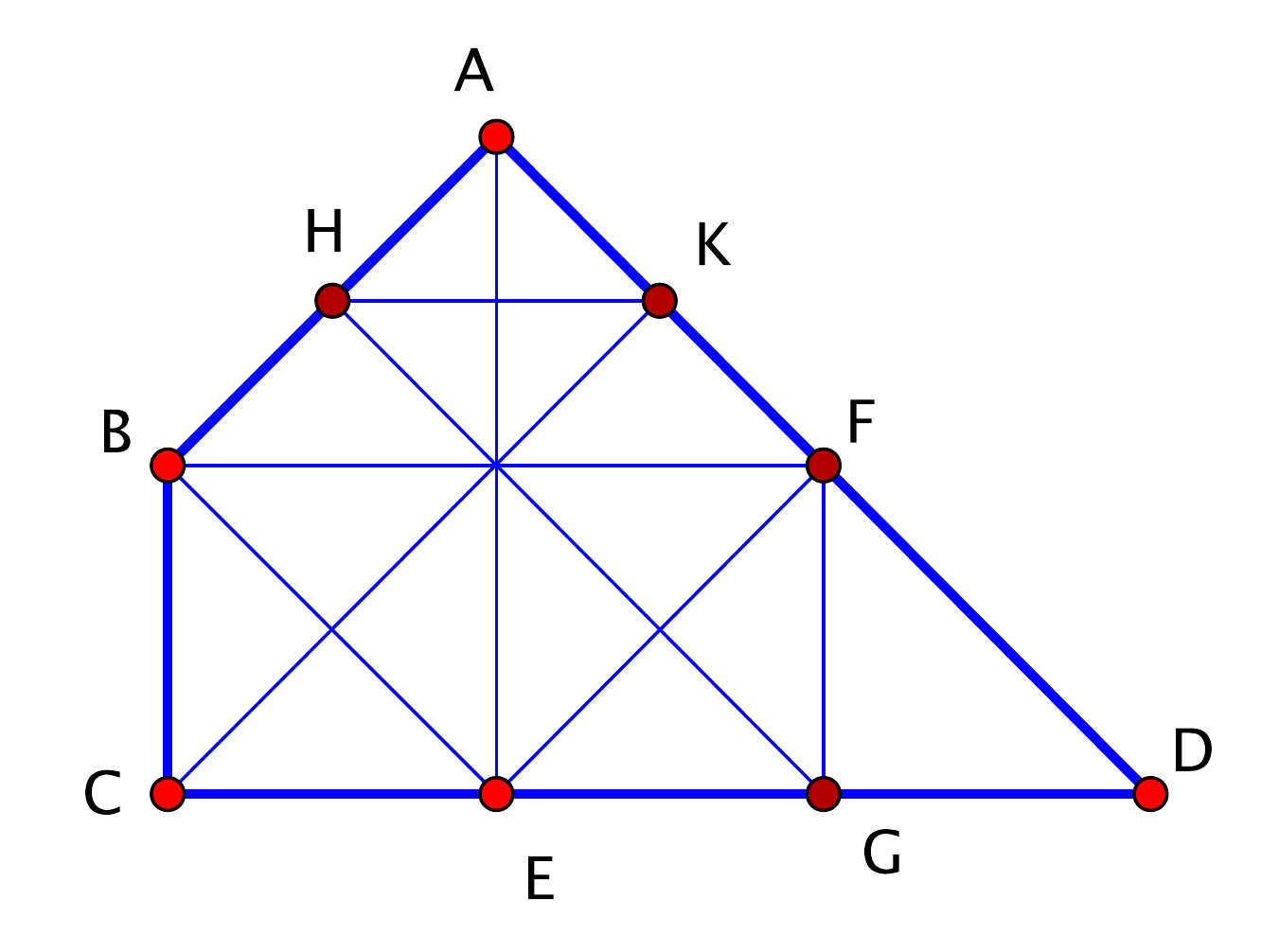} %
\caption{\label{quad_marked}
The \quadex with marked points on the sides.
}
\end{figure}

\begin{figure}[t]
  \newcommand{\figheight}{0.344\linewidth}%
  \centering%
  \subfloat[\label{fig:centers-ten}]{%
    \includegraphics[height=\figheight]{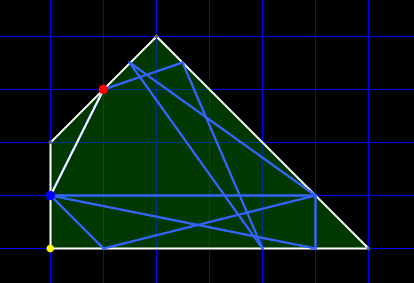}%
  }%
  \hfill%
  \subfloat[\label{fig:centers-nine}]{%
    \includegraphics[height=\figheight]{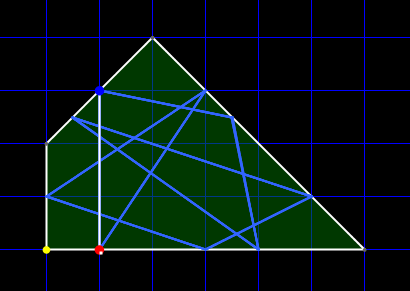}%
  }%
  \caption{\label{centers}
    The 10-periodic~\protect\subref{fig:centers-ten} and 9-periodic~\protect\subref{fig:centers-nine} trajectories in the \quadex.
  }
\end{figure}

In the first case, we have
\begin{equation*}
\begin{aligned}
AB\times BC \to BC\times CE \to CE\times DF \to DF\times BC \to BC\times GD \to GD \times DF \to\\
 DF\times AH \to AH\times ED \to ED\times KA \to KA\times AB \to AB\times BC.
\end{aligned}
\end{equation*}
Note that the orbit of the rectangle $AB\times BC$ is never split by a discontinuity line, and that it returns to itself after 10 iterations. Following a small segment in $AB\times BC$ reveals that this return map has order two, and its second iteration yields 20-periodic points.

In the second case, we have
\begin{equation*}
\begin{aligned}
AB\times CE \to CE\times FA \to FA\times BC \to BC\times EG \to EG\times DF \to\\
 DF\times HB \to HB\times ED \to ED\times FK \to FK\times AB \to AB\times CE.
\end{aligned}
\end{equation*}
Once again, the rectangle is never split and it returns back after 9 iterations. As above one sees that the return map has order four, and its fourth iteration yields 36-periodic points.

Assuming that the grid squares in \cref{quad_space} are unit, one calculates the area of the phase space (with respect to the symplectic form on phase space) to be equal to 19, whereas the phase areas of the rectangles $AB\times BC$ and $AB\times CE$ are unit. The orbit of the former rectangle has area 10, and that of the latter has area 9. Hence the whole phase space is tiled by these rectangles, proving that every phase point is either 20- or 36-periodic. 

The centers of the rectangles $AB\times BC$ and $AB\times CE$ are, respectively, 10- and 9-periodic trajectories, see \cref{centers}.
\end{proof}

\begin{remark}\label{rm:slow_fast}
{\rm
\cref{perturbation} shows the phase portrait of the symplectic billiard map of a small perturbation of the \quadex. In accordance with \cref{prop:periodic}, the 10-periodic tile gives rise to an isolated 10-periodic hyperbolic orbit, whereas the 9-periodic orbit persists and is surrounded by a periodic tile. The points close to the hyperbolic 10-periodic orbit exhibit a kind of slow-fast dynamics. They travel slowly along the hyperbolas while jumping fast between tiles, as is clearly seen in the figure. 
}
\end{remark}

\begin{figure}[t]
\centering%
\includegraphics[height=4in,trim={3pt 0 3pt 0},clip]{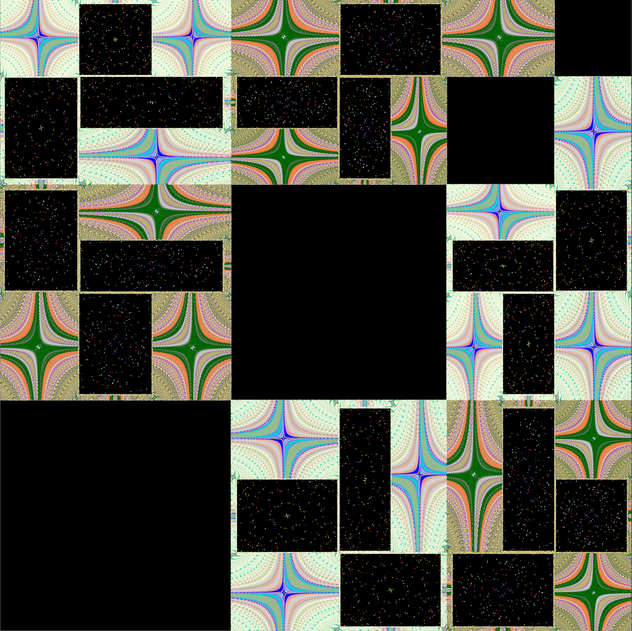}%
\caption{\label{perturbation}
The phase portrait of a small perturbation of the \quadex.
}
\end{figure} 

\section{The \penthouseex} \label{sect:pent}

The next periodic polygonal symplectic billiard table in our collection is a pentagon obtained by placing a triangle on top of a parallelogram. Applying an affine transformation, we normalize the parallelogram to be a unit square, and we call the resulting pentagon {\it the \penthouseex} (a pentagon that looks like a house), see \cref{fig:pent}.  

\begin{figure}[t]
\centering%
\includegraphics[height=1.8in]{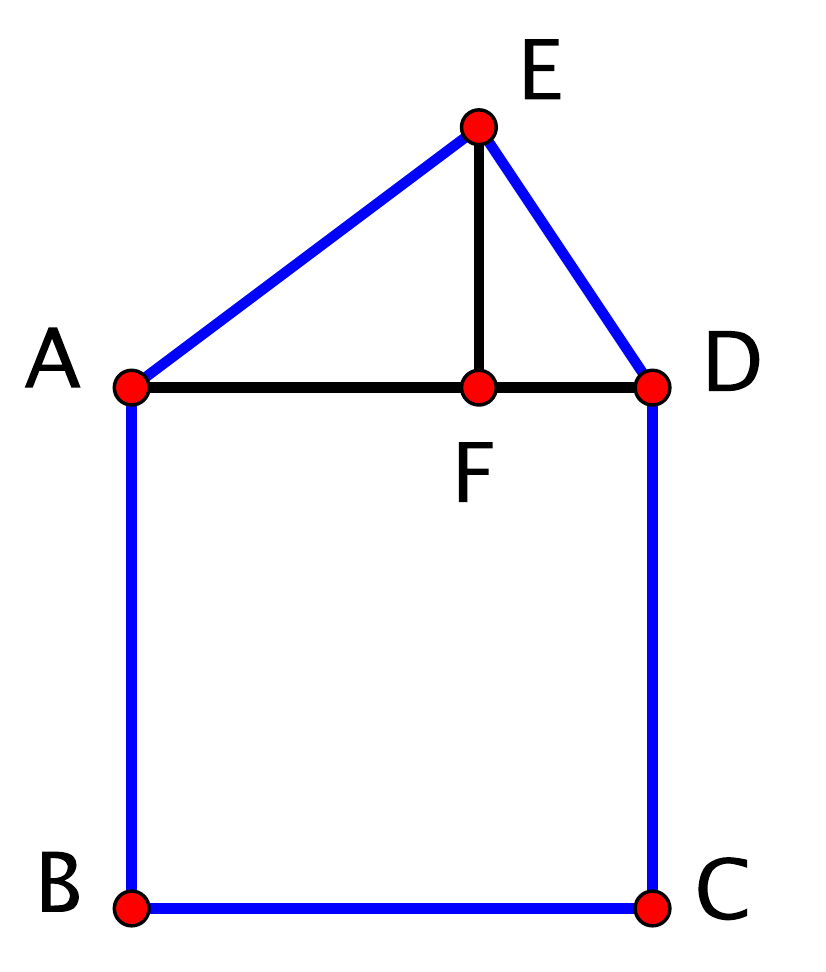}%
\caption{\label{fig:pent}
The \penthouseex.
}
\end{figure} 

The affine moduli space of these pentagons is 2-dimensional (while the affine moduli space of all pentagons is 4-dimensional). As parameters $(a,b)$, one can choose the elevation of the ``roof"
$a=|EF|$ and its horizontal displacement $b=|AF|$. Then $a$ is a positive real, and $0<b<1$.  Note that, in the limit $b=0$ or $b=1$, we obtain a trapezoid.

A straightforward calculation yields the following result.

\begin{lemma} \label{lm:pentareas}
The following table shows 9 rectangles whose union is the phase space together with their phase areas.
$$
\begin{array}{|c|c|c|c|c|}
\hline
AB\times BC & BC\times CD & BC \times DE & CD \times DE & CD\times EA \\ 
\hline
1 & 1 & a & 1-b & b\\
\hline	
\end{array}
$$
$$
\begin{array}{|c|c|c|c|}
\hline
DE\times EA & DE\times AB & EA\times AB & EA\times BC\\
\hline
a & 1-b & b & a\\
\hline
\end{array}	
$$
\end{lemma}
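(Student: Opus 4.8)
The plan is to reduce the statement to the evaluation of nine $2\times 2$ determinants. First I would fix affine coordinates adapted to the normalization already made in the text. Placing the unit square so that its top edge runs from the top-left corner $A=(0,1)$ to the top-right corner $D=(1,1)$, with $B=(0,0)$ and $C=(1,0)$ the bottom corners, the apex of the roof is $E=(b,1+a)$ and its foot on the top edge is $F=(b,1)$; this makes $a=|EF|$ and $b=|AF|$ exactly the elevation and horizontal displacement described in the text, and a shoelace computation confirms that $A,B,C,D,E$ is positively (counterclockwise) oriented. From this I read off the five side vectors $v_A=B-A=(0,-1)$, $v_B=C-B=(1,0)$, $v_C=D-C=(0,1)$, $v_D=E-D=(b-1,a)$, and $v_E=A-E=(-b,-a)$.

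Next I would pin down the combinatorics of the phase space. By \cref{lm:area} and the construction of $\Phi_{\bf P}$, the reduced phase space is the union of the rectangles $v_i\times v_j$ with $i\neq j$, $v_i\not\parallel v_j$, and $[v_i,v_j]>0$, and the phase area of such a rectangle is precisely $[v_i,v_j]$. So the content of the lemma is twofold: first, list all non-parallel pairs of sides, oriented so that the determinant is positive; second, evaluate those determinants. For the first part, the only pair of parallel sides is $\{AB,CD\}$, the two vertical walls $v_A=(0,-1)$ and $v_C=(0,1)$; every other pair is non-parallel, since $v_B$ is the unique horizontal side and $[v_D,v_E]=a\neq 0$. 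Hence there are $\binom{5}{2}-1=9$ admissible unordered pairs, matching the number of rectangles in the table, and for each I retain the ordering with positive determinant.

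Finally I would carry out the evaluation of $[v_i,v_j]=v_{i,x}v_{j,y}-v_{i,y}v_{j,x}$ for the nine pairs and match the outcomes to the table: one finds $[v_A,v_B]=1$, $[v_B,v_C]=1$, $[v_B,v_D]=a$, $[v_C,v_D]=1-b$, $[v_C,v_E]=b$, $[v_D,v_E]=a$, $[v_D,v_A]=1-b$, $[v_E,v_A]=b$, and $[v_E,v_B]=a$, reproducing the two rows exactly. That all nine values are positive confirms that the chosen orderings are those lying in $\Phi_{\bf P}$, so the nine rectangles are precisely $\Phi_{\bf P}$ and their union is the whole phase space. There is no genuine obstacle beyond bookkeeping: the only points requiring care are choosing a coordinate placement in which $a$ and $b$ are literally $|EF|$ and $|AF|$, verifying that $\{AB,CD\}$ is the \emph{sole} parallel pair (so that exactly nine rectangles survive), and fixing, for each surviving pair, the orientation that renders the determinant positive.
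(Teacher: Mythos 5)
Your proposal is correct: all nine determinants, the orientation check, and the identification of $\{AB,CD\}$ as the unique parallel pair are accurate, and the conclusion matches the table exactly. The paper offers no proof at all for this lemma beyond the remark that ``a straightforward calculation yields the following result,'' and your computation of the brackets $[v_i,v_j]$ in normalized coordinates is precisely that intended calculation, so the approaches coincide.
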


Recall a result concerning the trapezoids, \cite{AT}. Let $u>v$ be the lengths of the parallel sides of a trapezoid. Define its {\it modulus} as $\lfloor u/(u-v)\rfloor\in \mathbb{Z}$; this is an affine invariant. A trapezoid is {\it generic} if $u/(u-v) \not\in \mathbb{Z}$. The result that we need is as follows: {\it all orbits in a trapezoid are periodic, and if the modulus of a generic trapezoid is $m$, then the periods are $16m-4, 16m+4$, and $16m+12$}.

Define the modulus of a \penthouseex similarly:
$$
m = \left\lfloor\frac{a+1}{a} \right\rfloor.
$$
When a \penthouseex degenerates to a trapezoid, its modulus becomes that of the trapezoid. A generic \penthouseex is defined similarly: $\frac{a+1}{a} $ is not an integer. We call a \penthouseex {\it tall} if $a>1$, that is, if
$m=1$.

\begin{conjecture} \label{conj:pent}
All orbits in a generic \penthouseex are periodic with the periods equal to $16m-4, 16m+4$, and $16m+12$, in particular, the periods do not change if one moves the roof horizontally (but the symbolic orbits may change). 
Under the bifurcation $m \mapsto m+1$, the tiles with the largest period $16m+12$ survive and become the tiles with the smallest period $16m+12 = 16(m+1)-4$; the tiles with the periods $16m-4$ and  $16m+4$ die and the tiles with periods $16m+20$ and $16m+28$ are born.
\end{conjecture}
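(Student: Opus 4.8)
The plan is to reduce the conjecture to a finite, explicit tiling statement and then to organize that tiling by an induction on the modulus $m$. By \cref{lm:connected}, every point of $\Phi_{\mathbf P}$ lies either on the (measure-zero) discontinuity set, a union of horizontal and vertical segments, or in a rectangular tile; a tile of positive area is automatically periodic, and by \cref{prop:periodic} its period is determined by the length $n$ of its symbolic orbit together with the order ($4$, $2$, or $1$) of the return map, which in turn is constrained by $n\bmod 4$. So the first task is to exhibit finitely many positive-area rectangular tiles whose areas sum to the total phase area. Summing the nine entries of \cref{lm:pentareas} gives that total as $4+3a$, which by \cref{lm:diffbody} also equals $\operatorname{area}D(\mathbf P)$; note that this total is already independent of $b$. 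A complete proof would produce an explicit list of seed rectangles, trace each under $T$ exactly as in the proof of \cref{thm:quad}, and check both that no orbit is split by a discontinuity segment and that the tile areas exhaust $4+3a$.

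For the periods themselves I would argue by induction on $m$. The base case is the tall \penthouseex ($m=1$, periods $12,20,28$), where the bounded number of seed rectangles can be tracked by hand as in \cref{thm:quad}. The fact that the conjectured periods $16m\mp4$ and $16m+12$ are \emph{affine-linear} in $m$ is the signature of a renormalization: the first-return map of $T$ to a suitably chosen sub-region of $\Phi_{\mathbf P}$ should again be (affinely conjugate to) a symplectic billiard of \penthouseex- or trapezoidal type with modulus $m-1$, each renormalization step inserting one fixed block of $16$ reflections. This mirrors the trapezoid analysis of \cite{AT}, where the modulus $\lfloor u/(u-v)\rfloor$ plays exactly the role that $m=\lfloor(a+1)/a\rfloor=\lfloor1+1/a\rfloor$ plays here. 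Setting up this first-return renormalization and verifying that it carries the modulus-$m$ tiling to the modulus-$(m-1)$ one is the structural heart of the argument.

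To see that the periods do not change as the roof moves horizontally, I would fix a generic $a$ (hence a fixed $m$) and vary $b\in(0,1)$. The seed areas and the discontinuity set depend continuously on $b$, while the total area $4+3a$ is $b$-independent. As $b$ varies, a discontinuity segment can sweep across a tile, splitting or merging tiles and thereby changing symbolic orbits; the goal is to show that the period is nonetheless preserved at every such transition. The mechanism is that the return-map order is pinned to $n\bmod4$ by \cref{prop:periodic}, so a tile cannot silently change period without a corresponding change in $n\bmod4$, which the area bookkeeping forbids. Concretely I would show that the total area carried by each of the three period classes $16m-4$, $16m+4$, $16m+12$ is a $b$-independent constant, obtained by grouping the (already $b$-independent) entries of \cref{lm:pentareas} appropriately.

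The bifurcation $m\mapsto m+1$ occurs precisely when $1+1/a$ crosses an integer; there a discontinuity segment degenerates and the orbit combinatorics reconnects, and the conjectured survival of the $16m+12$ tiles (relabeled $16(m+1)-4$) together with the birth of the $16m+20$ and $16m+28$ tiles is exactly the statement that one renormalization block is inserted. The main obstacle — and the reason this remains a conjecture — is twofold. First, completeness of the tiling must be established \emph{uniformly in $m$}: one must rule out a positive-area aperiodic remainder for every modulus, not merely for the finitely many cases one can check by direct tracking, which requires the area accounting to close in a form valid for all $m$. Second, the renormalization must be made rigorous while the symbolic orbits genuinely change with $b$; the induction therefore has to be phrased in a $b$-uniform, affine-invariant way, and controlling this symbolic combinatorics through the full $b$-dependence is the principal difficulty.
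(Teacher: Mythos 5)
First, a point of order: \cref{conj:pent} is stated in the paper as a \emph{conjecture} --- the paper contains no proof of it. Its only support there is \cref{thm:pent}, which settles the tall case $m=1$ (periods $12,20,28$, matching $16m-4$, $16m+4$, $16m+12$) together with the first bifurcation $a=1$, both by explicit tracing of tiles as in \cref{thm:quad}, plus numerical evidence. Your proposal is honest about this: it is a research program, not a proof, and its base case coincides with what the paper actually proves. The structural heart of your program --- a first-return renormalization that conjugates the modulus-$m$ \penthouseex billiard to a modulus-$(m-1)$ one, inserting a fixed block of $16$ reflections --- is plausible and well motivated by the trapezoid result of \cite{AT}, but it is asserted rather than constructed, and it is exactly the missing content; the paper offers no such mechanism, so there is nothing to compare it against.

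Beyond the gaps you yourself flag, two of your intermediate claims are shakier than you suggest. First, the areas of the three period classes are \emph{not} obtainable ``by grouping the entries of \cref{lm:pentareas}'': the periodic tiles are strictly finer than the nine rectangles $v_i\times v_j$ (in the tall case they are products of subsegments such as $HA\times FJ$ and $NL\times EK$ in the paper's notation), and their individual areas depend on $a$ and $b$ jointly, so $b$-independence of the class areas is itself a nontrivial claim needing proof, not bookkeeping. Second, your argument for $b$-independence of the periods --- that the return-map order is pinned to $n\bmod 4$ by \cref{prop:periodic}, so the period ``cannot silently change'' --- does not close: \cref{prop:periodic} constrains only the relation between the symbolic period $n$ and the geometric period ($4n$, $2n$, or $n$), and says nothing that prevents $n$ itself from changing when a discontinuity segment sweeps through a tile as $b$ varies; indeed the conjecture explicitly allows the symbolic orbits to change. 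So even granting a complete tiling for every $(a,b)$, the constancy of the periods in $b$ remains unproven. In short: your skeleton is consistent with the paper's partial results, but both the renormalization step and the $b$-invariance argument have genuine holes, and the statement remains, as in the paper, a conjecture.
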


\begin{remark}
{\rm
We point out that, if \cref{conj:pent} is correct, then there are infinitely many bifurcations as $a\to0$ and there three different periods go to infinity. In contrast, the limiting geometric object is a square for which the symplectic billiard map is periodic with one period being 4.
}	
\end{remark}


\begin{figure}[t]
  \newcommand{\figheight}{0.491\linewidth}%
  \centering%
  \subfloat[\label{fig:house_space-tall}]{%
    \includegraphics[height=\figheight]{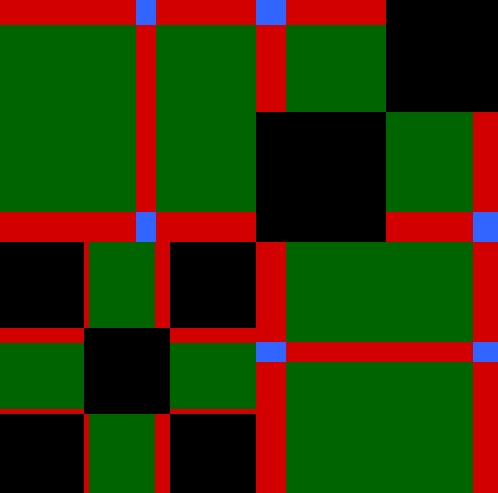}%
  }%
  \hfill%
  \subfloat[\label{fig:house_space-bifurcation}]{%
    \includegraphics[height=\figheight,trim={0 0 5pt 5pt},clip]{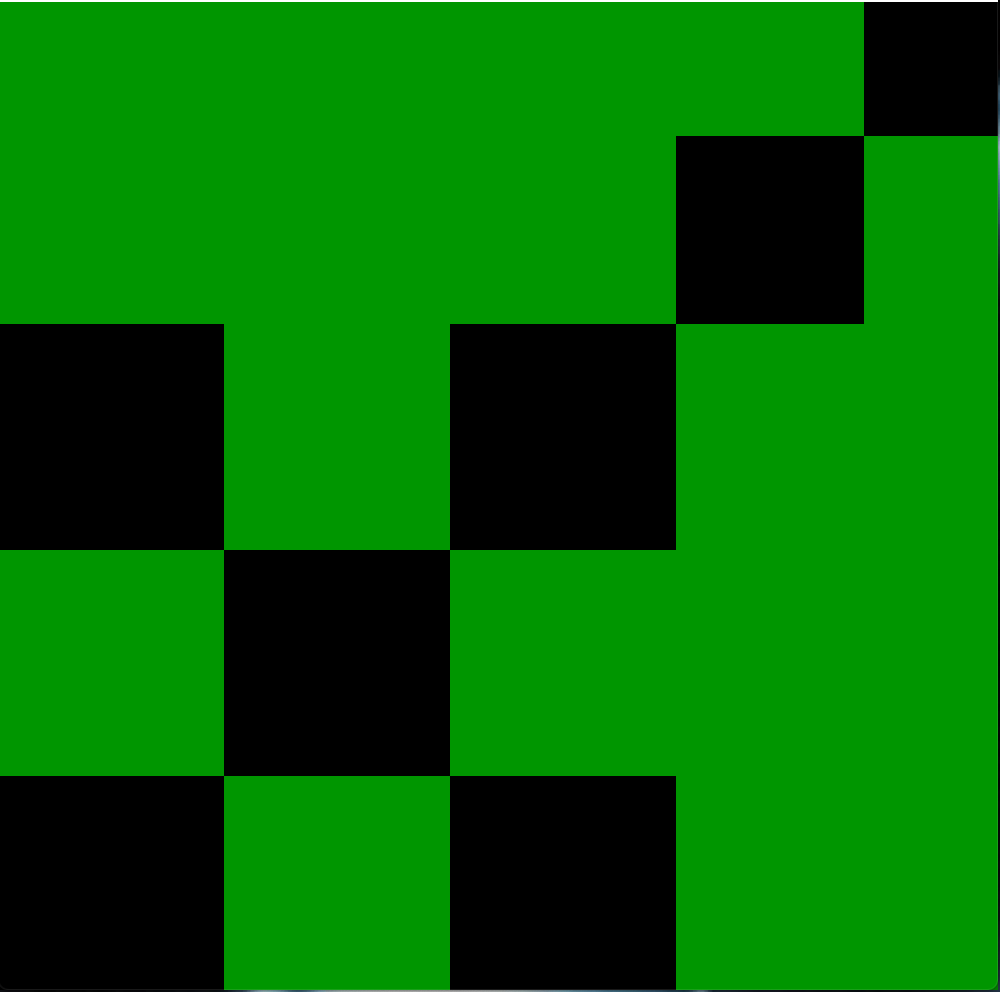}%
  }%
  \caption{\label{house_space}
    The phase space of a tall \penthouseex~\protect\subref{fig:house_space-tall} and at the first bifurcation case $a=1$~\protect\subref{fig:house_space-bifurcation}. The blue points have period 12, the red ones period 20, and the green ones period 28.
  }
\end{figure}

For a tall \penthouseex, we provide a complete analysis of the dynamics similar to the one given for the \quadex. We also examine the first bifurcation case, i.e., $a=1$.

\begin{theorem} \label{thm:pent}
All orbits in a tall \penthouseex are periodic with periods 12, 20, and 28. The structure of the orbits of periodic tiles is as follows, see \cref{house_space}.
\begin{itemize}
\item One orbit consisting of the tiles that return to themselves after 3 iterations, with the return map having 
order 4.
\item One orbit consisting of the tiles that return to themselves after 10 iterations, with the return map having 
order 2.
\item One orbit consisting of the tiles that return to themselves after 20 iterations, with the return map being the identity.
\item One orbit consisting of the tiles that return to themselves after 7 iterations, with the return map having 
order 4.
\item Two orbits consisting of the tiles that return to themselves after 28 iterations, with the return map being the identity.
\end{itemize}
In the first bifurcation case when $m$ changes values from $1$ to $2$, i.e., for $a=1$, the billiard map is fully periodic with one period being $28$, see \cref{house_space}. This is consistent with \cref{conj:pent}. We point out that there are three different types of symbolic orbits, though.
\end{theorem}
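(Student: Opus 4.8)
The plan is to follow the template of the proof of \cref{thm:quad}. For each of the five listed orbit types I will exhibit one representative phase rectangle (cut out by auxiliary points on the sides, as in \cref{quad_marked}), trace its symbolic itinerary $v_{i_0}\times v_{i_1}\to v_{i_1}\times v_{i_2}\to\cdots$ under $T$ until it returns to itself, check along the way that no discontinuity line cuts the rectangle so that the whole rectangle evolves as a single tile, read off the order of the return map, and finally verify that the five tile-orbits exhaust the phase space by an area count. Summing the nine entries of \cref{lm:pentareas} gives the total phase area
\[
1+1+a+(1-b)+b+a+(1-b)+b+a = 4+3a,
\]
which is the number the orbit-areas must reproduce; exactly as in the \quadex, it is this area identity, together with disjointness of distinct tiles, that upgrades ``these orbits occur'' to ``these orbits fill everything.''

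For the orders of the return maps I would invoke \cref{prop:periodic} with $n$ equal to the symbolic period, that is, the return time of the tile. The return times $3$ and $7$ are odd, so those return maps automatically have order $4$ and the tiles are automatically non-isolated, producing the periods $12$ and $28$. For the three even return times $10,20,28$ one must in addition verify that the distortion product $\lambda$ of \eqref{eq:linearized_return_map_case_n=4k} equals $1$; this is a direct telescoping computation of the ratios $\sin\alpha_j/\sin\alpha_{j+1}$ along each itinerary, and $\lambda=1$ then forces order $2$ for $n=10$ (period $20$) and the identity for $n=20$ and $n=28$ (periods $20$ and $28$). With the orders in hand, a tile-orbit of symbolic period $n$ contributes $n\cdot(\text{tile area})$ to the phase area, so the closing step is the identity $3A_1+10A_2+20A_3+7A_4+28A_5+28A_6=4+3a$, where $A_1,\dots,A_6$ are the six representative tile areas (with $A_5,A_6$ the two period-$28$ tiles).

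The main obstacle is the horizontal roof-displacement $b$. The \quadex carried no analogous modulus, whereas here \cref{conj:pent} already flags that the symbolic itineraries can vary with $b$ even though the period list $12,20,28$ does not. I therefore expect the itinerary-tracing step to split into finitely many sub-cases, separated by the values of $b$ at which some tile first abuts a vertex; within each sub-case one must re-verify the no-splitting condition and recompute the $b$-dependent tile areas $A_i$. Two features make this tractable: the reflection $b\leftrightarrow 1-b$ is an affine symmetry of the \penthouseex and can be used to halve the case analysis, and the area identity above must hold \emph{identically} in $b$, which gives a strong internal check on each candidate set of itineraries.

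Finally, the bifurcation value $a=1$ is treated separately, since there $(a+1)/a=2$ is an integer, genericity fails, and the period-$12$ and period-$20$ tiles degenerate. Here I would re-run the itinerary analysis directly at $a=1$ and show that every surviving tile has return data yielding period $28$ (either return time $7$ with an order-$4$ map, or return time $28$ with the identity), that there are exactly three distinct symbolic types among them, and that their areas sum to $4+3\cdot1=7$. This simultaneously establishes full $28$-periodicity and exhibits the $16m+12\mapsto 16(m+1)-4$ survival predicted by \cref{conj:pent}.
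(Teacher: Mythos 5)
Your overall strategy is indeed the paper's strategy: exhibit representative phase rectangles, trace their itineraries under $T$, check that no discontinuity line splits them, determine the orders of the return maps, and conclude that the tile orbits cover the phase space. But as written the proposal is a proof outline, not a proof, and the gap is exactly the part you defer: for a theorem of this kind the content \emph{is} the finite verification. You never construct the marked points on the sides, never write down the six itineraries (of lengths $3,10,20,7,28,28$), never check the no-splitting condition, and never compute the $b$-dependent tile areas that your closing identity $3A_1+10A_2+20A_3+7A_4+28A_5+28A_6=4+3a$ requires. Moreover, this verification is not routine bookkeeping that can be waved through: the paper's proof (see \cref{house_1}) first has to establish auxiliary Euclidean facts about the marked configuration --- the concurrence of the lines $BN$, $CK$ and the vertical through $E$ at a point $Y$ (and the analogous point $Z$), and the equalities $EK=XH$, $EL=NG$, hence $HZ=YG$ --- without which the tile boundaries cannot even be defined consistently, let alone shown to propagate unsplit and to close up into the orbits \eqref{eqn:3periodic}--\eqref{eqn:28periodic_2}. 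Your proposal does not anticipate that any such geometric input is needed, so the plan as stated could not be executed without discovering and proving these lemmas first.

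Two smaller points of comparison. First, your use of \cref{prop:periodic} for the return-map orders is legitimate, and in fact you can simplify it: for the even return times $10,20,28$ no telescoping computation of $\lambda$ is needed, because a tile of positive area is by definition non-isolated, and non-isolated is equivalent to $\lambda=1$ (see the proof of \cref{prop:periodic}); the paper instead reads off the orders by following a small segment, as in \cref{thm:quad} --- either way is fine. Second, for $a=1$ the paper does not re-run the analysis from scratch as you propose; it takes the limit $a\searrow1$ and observes that the tiles with sides $HA$, $FJ$, $DG$ degenerate simultaneously, killing the orbits \eqref{eqn:3periodic}, \eqref{eqn:10periodic}, \eqref{eqn:20periodic}, while \eqref{eqn:7periodic}, \eqref{eqn:28periodic}, \eqref{eqn:28periodic_2} survive and still cover the phase space, giving the three symbolic types of period $28$. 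This limit argument is both shorter and explains the ``survival'' phenomenon of \cref{conj:pent} directly; if you re-derive everything at $a=1$ you must additionally match your new itineraries with the surviving ones to draw that conclusion.
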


\begin{proof}
The following proof relies on \cref{house_1}. Without loss of generality, we assume that $b > 1/2$, that is, vertex $E$ is right of the perpendicular bisector of side $BC$. In this figure, the lines are parallel to the sides of the pentagon. If vertex $E$ moves horizontally, the order of some points on side $BC$ may change.

\begin{figure}[t]
\centering%
\includegraphics[height=3.3in]{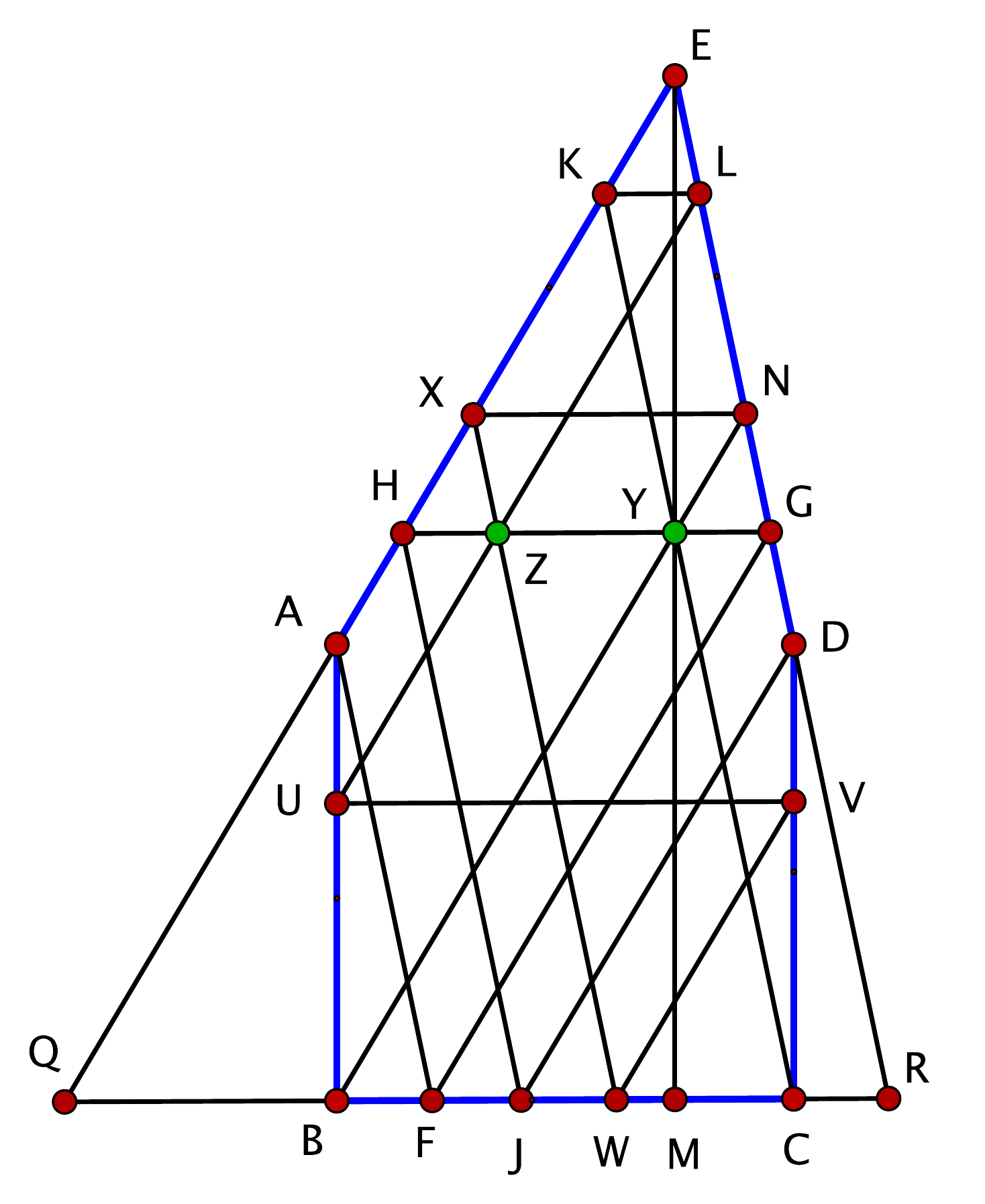}%
\caption{\label{house_1}
Marked tall \penthouseex.
}
\end{figure} 

For our analysis, it is important to notice two concurrences of lines, at points $Y$ and $Z$. Let us prove this elementary geometry fact for point $Y$; point $Z$ is treated similarly. We also note that 
$EK=XH,\ EL=NG$, and hence $HZ=YG$; this will follow from the analysis of the dynamics below.

Let $Y$ be the intersection point of the lines $BN$ and $CK$. First we show that the perpendicular, dropped from $E$ to the base $BC$, passes through $Y$. Indeed, since $ABCD$ is a square, the triangle $AED$ is obtained from the triangle $BYC$ by the vertical parallel translation. Therefore the altitude from vertex $Y$ is translated to the altitude from vertex $E$, hence they lie on the same vertical line.

Next, let $HG$ be the horizontal segment though point $Y$. 
Draw the line parallel to $EA$ through point $G$ to construct point $F$, and then the line parallel to $ED$ through point $F$.  We need to show that this line passes though the vertex of the square, point $A$. 

Indeed, let $A'$ be the intersection of this line with the vertical line though point $B$. We want to show that $A'=A$. We have $BF=YG=CR$, hence the triangles $CDR$ and $BA'F$ are congruent, and therefore $A'=A$. 

A similar argument shows that if one draws the line parallel to $ED$ through point $H$ to construct point $J$, and then the line parallel to $EA$ through point $J$, then this line passes through vertex $D$. 

Now we can describe the evolution of phase rectangles. We  refer to \cref{house_1}. For a point on the ``roof", such as point $X$, we use the notation $X^\perp$ for its orthogonal projection on the base $BC$ (these projections are not marked not to clutter the figure).

The 3-periodic orbit is easy to describe, it consists of the tiles surrounding the 3-periodic orbit in triangle $QER$ that connects the midpoints of its sides:
\begin{equation}\label{eqn:3periodic}
HA\times FJ \to FJ \times DG \to DG\times HA \to HA\times FJ.
\end{equation}

The 10-periodic orbit is as follows:
\begin{equation}
\begin{aligned} \label{eqn:10periodic}
NL\times HA \to HA\times UB \to UB\times BH^\perp \to BH^\perp \times CV \to CV \times HA \to\\ HA \times WC \to WC\times DG \to DG \times KX \to KX\times FJ \to FJ\times NL \to NL\times HA.
\end{aligned}
\end{equation}
The 7-periodic orbit is as follows:
\begin{equation} \label{eqn:7periodic}
\begin{aligned}
NL\times KX \to KX\times UB \to UB\times X^\perp K^\perp \to X^\perp K^\perp \times CV \to\\
 CV \times KX \to KX \times WC \to WC\times NL \to NL\times KX.
\end{aligned}
\end{equation}
Here is a 20-periodic orbit:
\begin{equation} \label{eqn:20periodic}
\begin{aligned}
&LE\times HA \to HA\times AU \to AU\times BH^\perp \to BH^\perp \times VD \to VD\times HA \to 
\\
 &HA\times JW \to JW\times DG \to DG\times XH \to XH\times FJ \to FJ\times GN \to 
 \\
 &GN\times HA \to HA\times BF \to BF\times DG \to DG\times AB \to AB\times G^\perp C \to 
 \\
 &G^\perp C\times CD \to CD\times DG \to DG\times EK \to EK\times FJ \to FJ\times LE \to LE\times HA.
\end{aligned}
\end{equation}
It remains to describe the two 28-periodic orbits. Here they are:
\begin{equation} \label{eqn:28periodic}
\begin{aligned}
LE\times EK \to EK\times AU \to AU\times K^\perp E^\perp \to K^\perp E^\perp \times VD \to VD\times EK \to \\
EK\times JW \to JW\times LE \to LE\times XH \to XH\times AU \to AU\times H^\perp X^\perp \to \\
H^\perp X^\perp \times VD \to VD \times XH \to XH\times JW \to JW\times GN \to GN \times XH \to \\
XH\times BF \to BF\times GN \to GN\times AB \to AB \times N^\perp G^\perp  \to N^\perp G^\perp \times CD \to \\
CD \times GN \to GN \times EK \to EK\times BF \to BF\times LE \to LE\times AB \to \\
AB\times E^\perp L^\perp \to E^\perp L^\perp \times CD \to CD\times LE \to LE\times EK,
\end{aligned}
\end{equation}
and
\begin{equation} \label{eqn:28periodic_2}
\begin{aligned}
NL\times EK \to EK\times UB \to UB\times K^\perp M \to K^\perp M\times CV \to CV\times EK \to
\\
EK\times WC \to WC \times LE \to LE\times KX \to KX\times AU \to AU\times X^\perp K^\perp \to 
\\
X^\perp K^\perp \times VD \to VD\times KX \to KX\times JW \to JW\times NL \to NL \times XH \to
\\
XH\times UB \to UB\times H^\perp X^\perp \to H^\perp X^\perp \times CV \to CV\times XH \to XH\times WC \to
\\
WC\times GN \to GN\times KX \to KX\times BF \to BF\times NL \to NL\times AB \to
\\
AB\times L^\perp N^\perp \to L^\perp N^\perp \times CD \to CD\times NL \to NL\times EK.
\end{aligned}
\end{equation}

By inspection, the above described orbits cover the whole phase space. Therefore, it remains to consider the case $a=1$.

In the limit $a\searrow1$ the tiles with sides $HA$, $FJ$, and $DG$ disappear simultaneously. This kills the periodic orbits \eqref{eqn:3periodic}, \eqref{eqn:10periodic}, and \eqref{eqn:20periodic}. At the same time the periodic orbits \eqref{eqn:7periodic}, \eqref{eqn:28periodic}, and \eqref{eqn:28periodic_2} survive. Their tiles still cover the phase space and they give rise to different symbolic orbits.
\end{proof}

The next \cref{pent_perturb} shows the phase space of a small perturbation of a tall \penthouseex.

\begin{figure}[t]
\centering%
\includegraphics[height=4in,trim={1pt 0 2pt 0},clip]{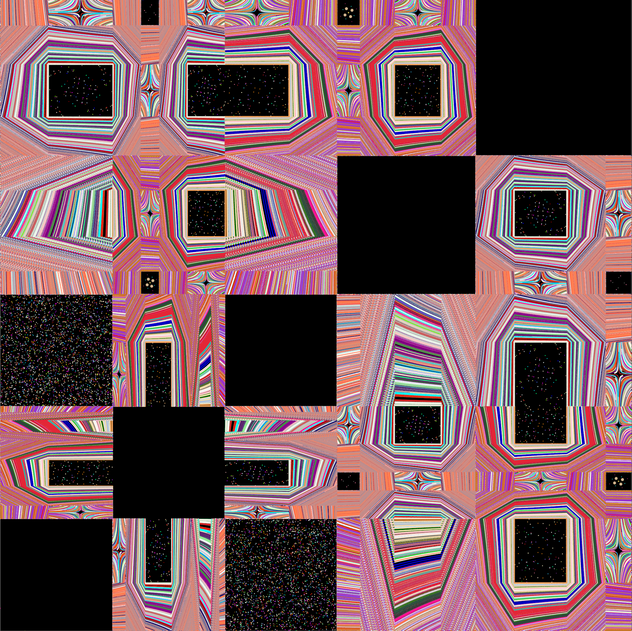}%
\caption{\label{pent_perturb}
The phase portrait of a small perturbation of a tall \penthouseex: in accordance with \cref{prop:periodic}, the 3- and 7-periodic orbits survive the perturbation.
}
\end{figure} 

\section{(Lattice) hexagons with parallel opposite sides} \label{sect:hexpar}

We first consider hexagons with parallel opposite sides. After an affine transformation we assume that the directions of the sides are those of an equilateral triangle. In this situation we have the following immediate corollary of \cref{lm:area}.

\begin{figure}[t]
  \newcommand{\figheight}{0.45\linewidth}%
  \centering%
  \subfloat[\label{fig:hexpar}]{%
    \includegraphics[height=\figheight,trim={3pt 7pt 33pt 2pt},clip]{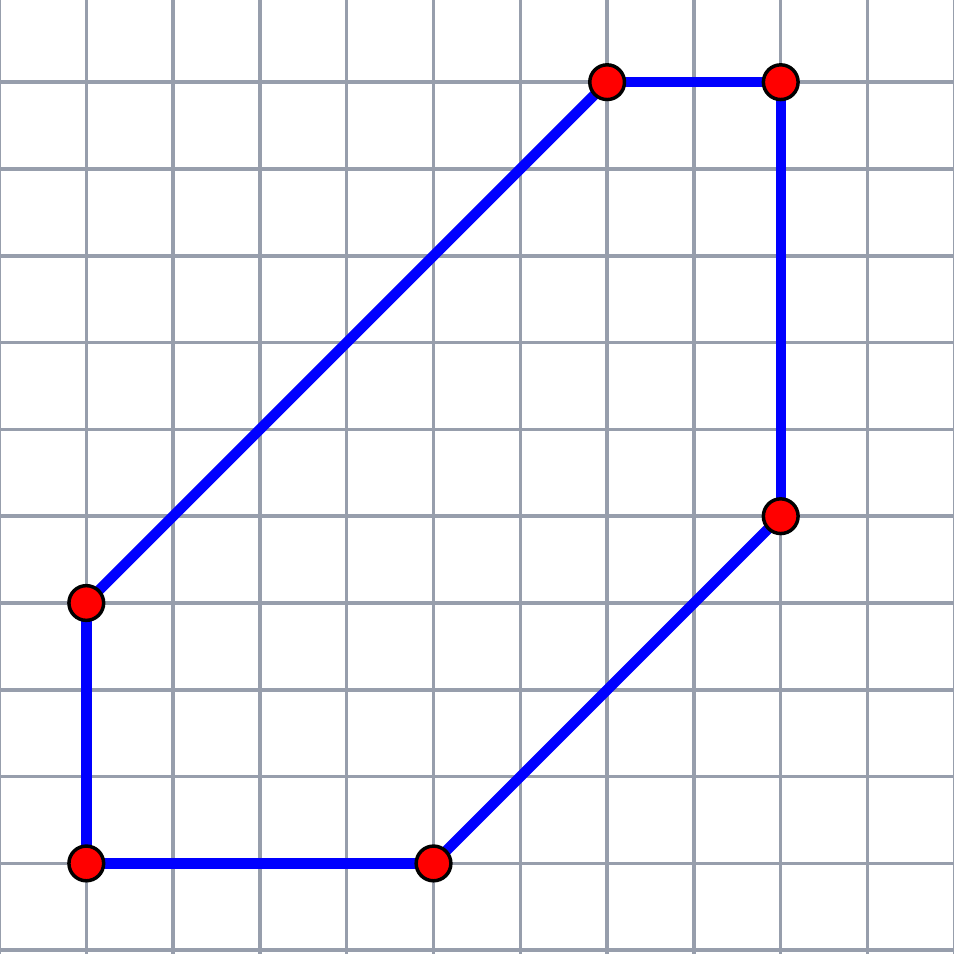}%
  }%
  \hfil%
  \subfloat[\label{fig:hexpar-orbit}]{%
    \includegraphics[height=\figheight]{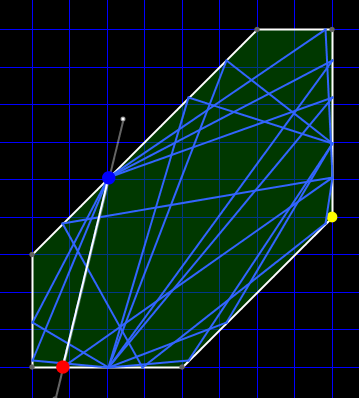}%
  }%
  \caption{\label{hexpar_1}
  A lattice hexagon with parallel opposite sides~\protect\subref{fig:hexpar} and a corresponding periodic orbit ~\protect\subref{fig:hexpar-orbit}.
  }
\end{figure}

\begin{corollary}\label{cor:hexagon_from_equilateral_triangle}
Let $P$ be a hexagon as above. The symplectic billiard map $T$ is a local Euclidean isometry and has the form
$$
T: (x,y) \mapsto (y, -x+b),\ \ x\in P_i P_{i+1}, y\in P_j P_{j+1}, z\in P_k P_{k+1},
$$
where $b$ depends on $i,j,k$. In particular, the symplectic billiard map $T$ has no hyperbolic periodic points.
\end{corollary}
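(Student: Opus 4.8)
The plan is to read everything off \cref{lm:area} once the constant $a$ is determined in the present geometry. Recall that \cref{lm:area} gives $T:(x,y)\mapsto(y,ax+b)$ with $a=-\sin\alpha/\sin\beta$, where $\alpha$ is the angle between the directions $v_i$ and $v_j$ of the side carrying the tail and the side carrying the head, and $\beta$ is the angle between $v_j$ and $v_k$. The key geometric input is that, after the affine normalization, the six sides of $P$ point in only three directions, namely the three directions of the sides of an equilateral triangle, so that any two nonparallel sides meet at an angle of $60^\circ$ or $120^\circ$ (these differ because opposite sides are traversed in opposite directions, but this is immaterial below since the sines agree).

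First I would pin down $a$. The reflection is defined only when the tail-side $i$ and the head-side $j$ are nonparallel, and when the outgoing chord lands on a side $k$ nonparallel to $j$; hence both pairs $(v_i,v_j)$ and $(v_j,v_k)$ consist of distinct directions among the three available ones. For every such pair the angle is $60^\circ$ or $120^\circ$, so $\sin\alpha=\sin\beta=\sqrt3/2$ and therefore $a=-1$. This already yields the claimed normal form $T:(x,y)\mapsto(y,-x+b)$, with $b$ depending only on the combinatorial data $i,j,k$ as in \cref{lm:area}.

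Next I would verify the isometry statement. In the arc-length coordinates $(x,y)$ on each rectangle $v_i\times v_j$, the differential of $T$ is the constant matrix $\left(\begin{smallmatrix}0&1\\-1&0\end{smallmatrix}\right)$, a rotation by $-\pi/2$; since $b$ contributes only a translation, $T$ restricted to each rectangle is a Euclidean isometry for the flat metric $dx^2+dy^2$. I would emphasize that this is genuinely stronger than \cref{rmk:symplectic_billiard_is_an_indefinite_isometry}: the invariant pseudo-Euclidean metric there is always available, but here the special value $a=-1$ makes the linear part orthogonal, so $T$ is simultaneously an isometry of the definite Euclidean metric.

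Finally, the absence of hyperbolic periodic points is immediate: at any $n$-periodic point the differential of the return map is $\left(\begin{smallmatrix}0&1\\-1&0\end{smallmatrix}\right)^{n}\in SO(2)$, whose eigenvalues lie on the unit circle, whereas a hyperbolic point would require an eigenvalue of modulus different from $1$. The only real subtlety in the whole argument is the angle bookkeeping in the second paragraph, namely confirming that the map is defined exactly when both relevant pairs of sides are nonparallel, so that $\sin\alpha$ and $\sin\beta$ are forced to the common value $\sqrt3/2$; once that is in hand, everything else is a one-line consequence of \cref{lm:area}.
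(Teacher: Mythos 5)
Your proof is correct and follows exactly the route the paper intends: the paper states this as an ``immediate corollary'' of \cref{lm:area}, the point being precisely that all nonparallel side directions meet at $60^\circ$ or $120^\circ$, forcing $\sin\alpha=\sin\beta$ and hence $a=-1$, so that the linear part of $T$ is a rotation and periodic return maps lie in $SO(2)$. Your angle bookkeeping and the conclusion about hyperbolic periodic points fill in the same details the paper leaves to the reader.
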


\begin{remark}
{\rm
The affine invariant version of the previous corollary is that for any hexagon with parallel opposite sides the symplectic billiard map is a local  isometry with respect to inner product $AA^t$, where $A$ relates the given hexagon with a hexagon from \cref{cor:hexagon_from_equilateral_triangle}.

We also point out that, as opposed to \cref{rmk:symplectic_billiard_is_an_indefinite_isometry}, in this special situation $T$ is a isometry with respect to a sign-definite inner product, e.g., leading to the strong conclusion that it doesn't admit hyperbolic periodic points.
}
\end{remark}

A hexagon with parallel opposite sides is obtained from a triangle by cutting off the corners by the lines parallel to the sides. Applying an affine transformation, we may assume that the slopes of the sides are equal to $0, 1$ and $\infty$. In addition, we now consider lattice polygons, i.e, all vertices are lattice points, see \cref{hexpar_1}. The affine moduli space of these hexagons is described by three integral parameters (whereas the affine moduli space of hexagons is 6-dimensional).


The lattice points partition the sides into segments; let $p_1,q_1,r_1,p_2,q_2,r_2$ be the number of these elementary segments on the six sides in the cyclic order. Set
$$
N = p_1q_1+q_1r_1+r_1p_2+p_2q_2+q_2r_2+r_2p_1+
p_1r_1+q_1p_2+r_1q_2+p_2r_2+q_2p_1+r_2q_1.
$$

\begin{theorem} \label{thm:parhex}
All orbits in a lattice hexagon with parallel opposite sides are periodic, and the periods do not exceed $4N$.
\end{theorem}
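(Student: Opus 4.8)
The plan is to use the lattice structure to cut the reduced phase space into finitely many unit cells that are permuted by $T$, and then to control the return map on each cell via \cref{lm:connected}. I would fix the normalization in which the three side directions are the primitive lattice vectors $e_1=(1,0)$, $e_2=(1,1)$, $e_3=(0,1)$, so that any two of them satisfy $[e_a,e_b]=\pm1$, and parametrize each side by its lattice coordinate $\xi$ (an integer at each lattice point of that side). In these coordinates a chord from coordinate $\xi$ on a side of direction $e_a$ to coordinate $\eta$ on a side of direction $e_b$ corresponds, under the map $f$ of \cref{lm:diffbody}, to the vector $\eta e_b-\xi e_a$, so the invariant area form becomes $[e_a,e_b]\,d\xi\wedge d\eta=\pm\,d\xi\wedge d\eta$. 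Thus the unit lattice grid cuts $\Phi_{\bf P}$ into open unit cells of area $1$, and a direct computation shows that the number of cells equals $(p_1+p_2)(q_1+q_2)+(q_1+q_2)(r_1+r_2)+(r_1+r_2)(p_1+p_2)=N$, which is therefore also the total phase area.

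The key step is to show that $T$ maps lattice chords to lattice chords. If the tail $x$ and head $y$ of a chord are lattice points, on sides of directions $e_a$ and $e_b$, then the new head is $z=x+s\,e_b$, where $z$ lies on the target side, of direction $e_c$ through a lattice point $z_0$. Writing the incidence condition as $[e_c,z]=[e_c,z_0]\in\Z$ gives $s=([e_c,z_0]-[e_c,x])/[e_c,e_b]$, whose numerator is an integer and whose denominator is $\pm1$; hence $s\in\Z$ and $z$ is again a lattice point. Consequently $T$, and likewise $T^{-1}$, carries grid lines to grid lines. Since $T$ is affine on each rectangle $v_i\times v_j$ by \cref{lm:area}, it permutes the open unit cells, and the discontinuity set, being generated by the vertex chords and their $T$-iterates, is a union of grid lines, so every open cell avoids it.

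It remains to bound the period. The map $T$ induces a permutation of the $N$ open unit cells. Let $S$ be such a cell and let $\ell\le N$ be the length of its cycle, so $T^\ell(S)=S$. The return map $T^\ell|_S$ is an orientation-preserving affine isomorphism of a parallelogram onto itself (indeed a Euclidean isometry by \cref{cor:hexagon_from_equilateral_triangle}), so by the argument of \cref{lm:connected} it is conjugate to a rotation through a multiple of $\pi/2$ and has order $1$, $2$, or $4$. Hence $T^{4\ell}$ is the identity on $S$, and every point of $S$ is periodic with period dividing $4\ell\le4N$. Points lying on interior grid lines but outside the discontinuity set inherit the same bound by continuity of $T$ on the continuity rectangles, so all well-defined orbits are periodic with period at most $4N$.

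The main obstacle is the integrality statement of the second paragraph together with the claim that the entire discontinuity set is a union of grid lines: these are exactly the places where the lattice hypothesis and the determinant normalization $[e_a,e_b]=\pm1$ enter, and they are what upgrade the general ``all tiles are periodic'' picture into a uniform bound and full periodicity. The area count $=N$ and the order-$\le4$ conclusion are then routine consequences of \cref{lm:area,lm:connected}.
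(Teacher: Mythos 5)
Your proof is correct and follows essentially the same route as the paper's: partition the phase space into the $N$ unit cells given by products of elementary lattice segments, observe that $T$ permutes them, and conclude from the area count and the order-$\le 4$ return map that every period is at most $4N$. The only difference is that you explicitly supply the integrality argument (using $[e_a,e_b]=\pm1$) showing that $T$ preserves the lattice grid, a step the paper asserts without proof when it says the tiles ``evolve as single pieces'' under the map.
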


\begin{proof}
The phase space is subdivided into the tiles formed by the products of the elementary segments on the sides, that is, segments between lattice points, see \cref{hexpar_1}. These tiles evolve as single pieces under the symplectic billiard map. The phase area of each tile is one, and the whole phase area equals $N$. By the area preserving property, the period  of the orbit of each tile does not exceed $N$, and the return map to a tile is at most 4-periodic. 
\end{proof}

Of course, the upper bound of this theorem is unrealistically high. See \cref{hexpar_1} for an example of an orbit  and \cref{hex_perturb} for the phase space, colored according to  periods.


The next \cref{hex_perturb} shows the phase portrait of a small perturbation of a hexagon with parallel opposite sides.


\begin{figure}[t]
  \newcommand{\figheight}{0.493\linewidth}%
  \centering%
  \subfloat[\label{fig:hexpar-phase}]{%
    \includegraphics[height=\figheight,trim={28pt 0 0 0},clip]{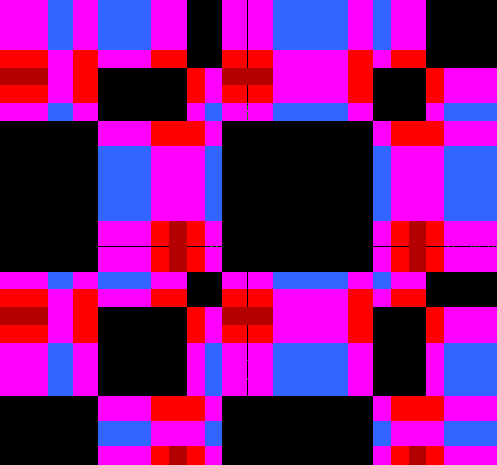}%
  }%
  \hfill%
  \subfloat[\label{fig:hexpar-perturbed}]{%
    \includegraphics[height=\figheight,trim={131pt 131pt 0 0},clip]{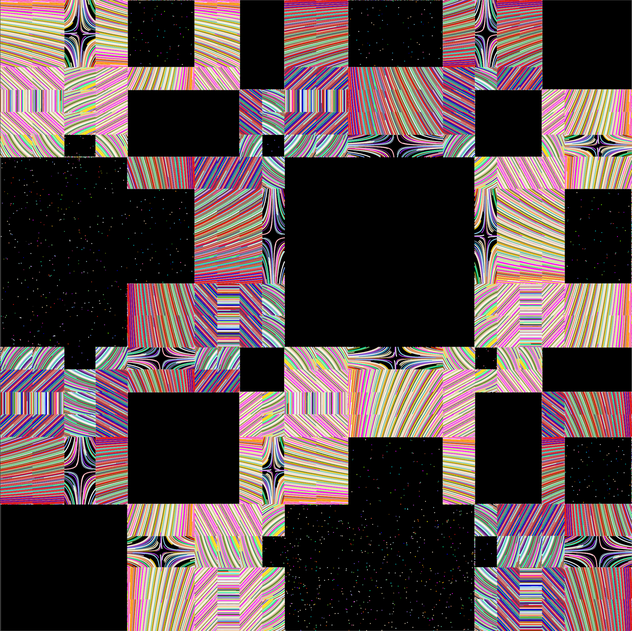}%
  }%
  \caption{\label{hex_perturb}
  The phase space~\protect\subref{fig:hexpar-phase} of the hexagon from \cref{hexpar_1}: the periods are $4,12,24, 36$. The phase space~\protect\subref{fig:hexpar-perturbed} of a small perturbation of the same hexagon.
  }
\end{figure}

\begin{remark}
{\rm
Consider a hexagon $P$ with parallel opposite sides having rational slopes. One can approximate $P$ by a rational hexagon whose vertices have rational coordinates. The symplectic billiard orbits in the approximating polygons are periodic, but their periods will grow with the least common denominators of the coordinates of the vertices of the approximating polygons. 
The same applies to the polygons described in the next section.
}	
\end{remark}

\section{The Hex(en)house and special octagons} \label{sect:hexhouse}

In this section we present two more families with fully periodic symplectic billiard map, the hex(en)house and special octagons. We can only give computer evidence and formulate two corresponding conjectures.

\begin{figure}[t]
\centering%
\includegraphics[height=1.5in]{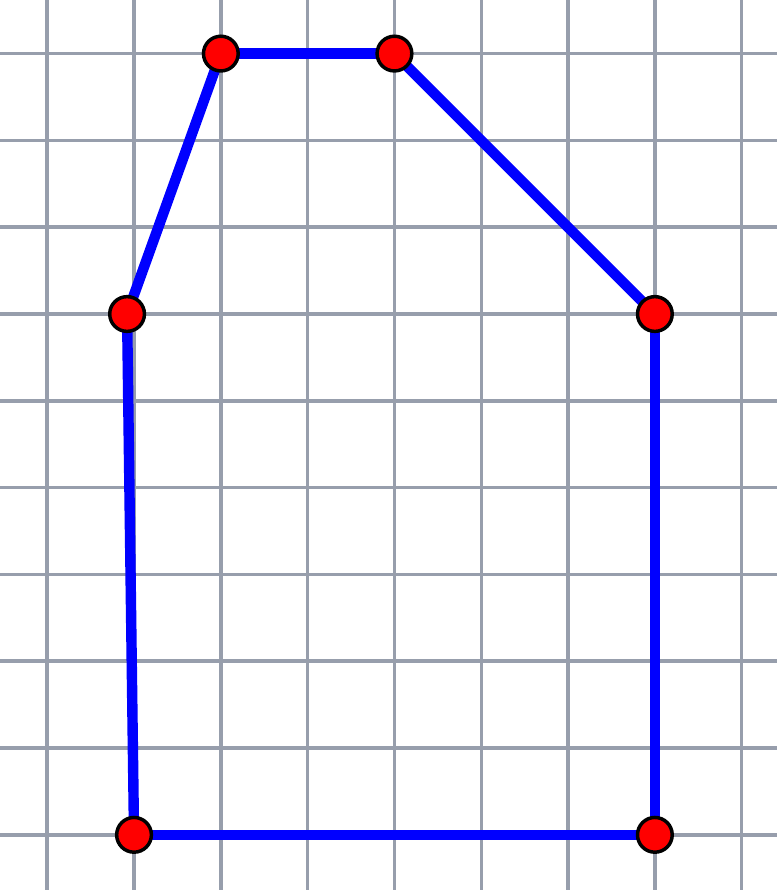}%
\caption{\label{hexh}
A hex(en)house.
}
\end{figure} 

The \hexhouseex is a lattice polygon obtained by placing a trapezoid on top of a square, see \cref{hexh}. The moduli space of such polygons is described by three integral parameters.
%
%
\cref{fig:hexhouse-list-1,fig:hexhouse-list-2,fig:hexhouse-list-3,fig:hexhouse-list-4} support the following conjecture. \cref{fig:hexhouse-perturbed} shows again the typical pattern of a perturbation.

\begin{conjecture} \label{conj:hexhouse}
All orbits in a \hexhouseex are periodic.
\end{conjecture}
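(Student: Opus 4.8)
The plan is to follow the template of \cref{thm:quad} and \cref{thm:pent}. For a fixed \hexhouseex one writes the reduced phase space $\Phi_{\bf P}$ as a finite union of coordinate rectangles $v_i\times v_j$ with explicitly computable areas, in the spirit of \cref{lm:pentareas}, then exhibits a finite list of tile orbits and checks by area accounting that their total area equals the total phase area. By \cref{lm:connected} every tile of positive area is periodic, so once the listed orbits are shown to cover $\Phi_{\bf P}$ the statement follows. The decisive preparatory step is the choice of coordinates: since the \hexhouseex is a lattice polygon, I would parametrize each side $i$ in \emph{lattice length}, writing a point as $P_i+\xi\,u_i$ with $u_i$ the primitive lattice vector along the side and $\xi\in[0,n_i]$, where $n_i$ is the number of elementary segments on that side.

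In these coordinates $T$ becomes rational, which is the structural heart of the argument. If $x$ lies on side $i$, its image $z$ on side $k$ satisfies $z-x=s\,v_j$ for the side $j$ carrying the intermediate point, and intersecting with the line of side $k$ gives
$$
\xi_z=\frac{[u_i,v_j]\,\xi_x+[P_i-P_k,\,v_j]}{[u_k,v_j]}.
$$
Every bracket on the right is an integer because $u_i,u_k,v_j$ are lattice vectors and $P_i,P_k$ are lattice points. Hence $T$ is piecewise affine with rational linear parts and rational break points; in particular the discontinuity set of \cref{lm:connected} lies on lines with rational coordinates, and, via \cref{lm:diffbody}, lattice chords correspond to lattice vectors of the difference body. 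This is the analogue, for a lattice polygon that need \emph{not} have parallel opposite sides, of the invariant integer tiling used in \cref{thm:parhex}. The novelty is that the coefficient $[u_i,v_j]/[u_k,v_j]$ is in general different from $-1$, so the integer tiling is no longer preserved and \cref{cor:hexagon_from_equilateral_triangle} does not apply; correspondingly, by \cref{prop:periodic}, hyperbolic (isolated, zero-area) periodic orbits may now occur. These are still periodic and hence harmless for the conjecture, but they rule out the Euclidean-isometry shortcut of the previous section.

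I would then isolate two points as the crux. The first is \emph{boundedness of denominators}: the displayed formula can multiply the denominator of a rational $\xi$ by $[u_k,v_j]$ at each step, and an orbit is periodic essentially if and only if these denominators stay bounded along it. Since $T$ and $T^{-1}$ are piecewise affine with finitely many rational pieces on a bounded phase space, proving such a bound would confine every rational orbit to a finite set, forcing periodicity on a dense set of rationals and, because each tile is a rectangle determined by \cref{lm:connected}, periodicity of all positive-area tiles. The second is \emph{covering and bifurcations}: one must show that the finitely many periodic tiles exhaust $\Phi_{\bf P}$ for every value of the three integer parameters. As in the trapezoid theorem of \cite{AT} and in \cref{conj:pent}, I expect this to be governed by a renormalization of Euclidean-algorithm type on the parameters, under which tiles are created and destroyed in a controlled way and periodicity propagates by induction from base cases (a square, or a parallel-opposite-sided hexagon covered by \cref{thm:parhex}, obtained when the roof or the square degenerates). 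The hard part will be making this renormalization explicit --- bounding the growth of the number of tile orbits and excluding the appearance, at bifurcation values, of a zero-area tile carrying an aperiodic orbit. Finally, motivated by the analogy with lattice outer billiards recorded in \cref{sect:intro}, I would test the stronger statement that \emph{every} convex lattice polygon has fully periodic symplectic billiard dynamics, for which the rationality of $T$ in lattice-length coordinates is exactly the natural starting point.
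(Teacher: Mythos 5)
You should first be aware that this statement is a \emph{conjecture} in the paper: the authors explicitly say they ``can only give computer evidence'' for it, and the only support offered is the collection of phase-space pictures in \cref{fig:hexhouse-list-1,fig:hexhouse-list-2,fig:hexhouse-list-3,fig:hexhouse-list-4}. So there is no proof in the paper to compare against, and your text is likewise not a proof but a research program: the two steps you yourself single out as the crux --- boundedness of denominators along orbits, and the covering/renormalization argument --- are precisely the open content of the conjecture, and neither is established or even reduced to a checkable finite computation in your sketch. By contrast, in the cases the paper actually proves (\cref{thm:quad}, \cref{thm:pent}, \cref{thm:parhex}), the argument is a finite, explicit verification: either an invariant unit tiling preserved by $T$ (\cref{thm:parhex}), or an exhaustive list of tile orbits whose areas sum to the total phase area. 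Your proposal replaces that finite verification by two unproven general principles.

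The gap is essential, not merely technical, and the paper itself shows why. Your first crux step rests only on the rationality of $T$ in lattice-length coordinates, but that property holds for \emph{every} convex lattice polygon, in particular for the kite with vertices $(-1,1),(-1,-1),(1,-1),(3,3)$ discussed in \cref{sect:problems}, for which a computer search found \emph{no} periodic orbit of period less than $2000$. So bounded denominators cannot follow from lattice structure alone; any correct proof must exploit geometry specific to the \hexhouseex, which your sketch never identifies --- and for the same reason the ``stronger statement'' you propose to test at the end is almost certainly false. Second, even granting periodicity of all rational phase points, your inference to the full conjecture does not close: \cref{lm:connected} forces periodicity only on tiles of \emph{positive} area, density of periodic points does not show that such tiles cover the phase space, and a zero-area tile can in principle carry an aperiodic orbit --- exactly the scenario you mention but do not exclude. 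Finally, the renormalization of Euclidean-algorithm type is asserted by analogy with trapezoids, but the period data in the paper's figures ($4,12,28$; $4,28,108,188$; $4,44,68,92$; $4,28,44,60,68,84,108$) exhibit no pattern comparable to the trapezoid periods $16m-4,16m+4,16m+12$, so even the expected form of such a renormalization is unclear. What your coordinates do honestly buy is a practical tool: for any \emph{fixed} \hexhouseex, exact rational arithmetic turns the verification of periodicity of all positive-area tiles into a finite certified computation, which would upgrade the paper's floating-point evidence --- but not settle the conjecture for all parameter values.
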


\begin{figure}[t]
  \newcommand{\figheight}{0.49\linewidth}%
  \centering%
  \subfloat[]{%
    \includegraphics[height=\figheight,trim={0 0 35.7cm 0},clip]{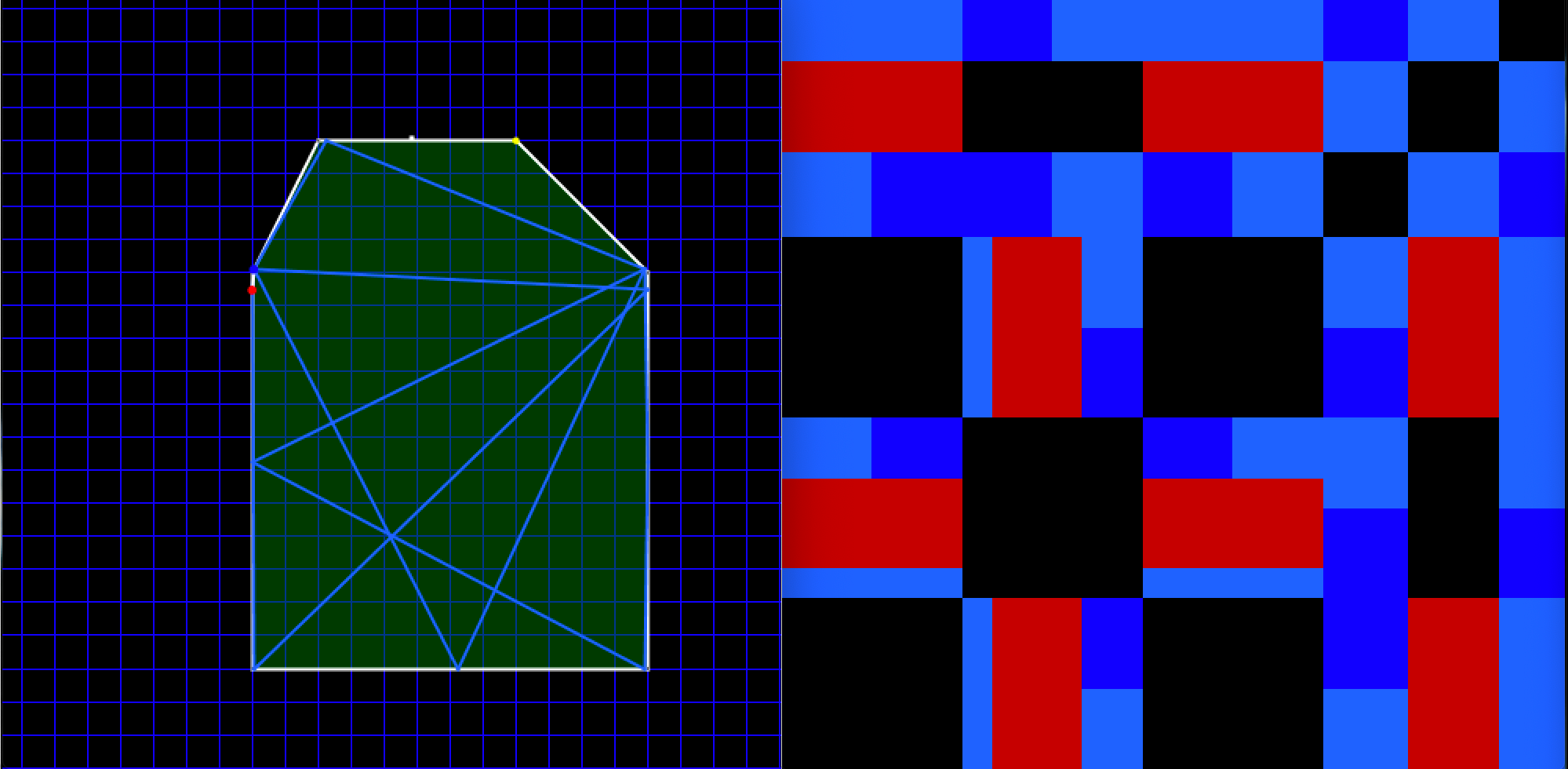}%
  }%
  \hfill%
  \subfloat[]{%
    \includegraphics[height=\figheight,trim={35.2cm 0 5pt 0},clip]{figures/hexhouse_periodic_1_flipped}%
  }%
  \caption{\label{fig:hexhouse-list-1}
    A periodic \hexhouseex with periods 4, 12, 28.
  }
\end{figure}

\begin{figure}[t]
  \newcommand{\figheight}{0.494\linewidth}%
  \centering%
  \subfloat[]{%
    \includegraphics[height=\figheight,trim={0 0 35.7cm 0},clip]{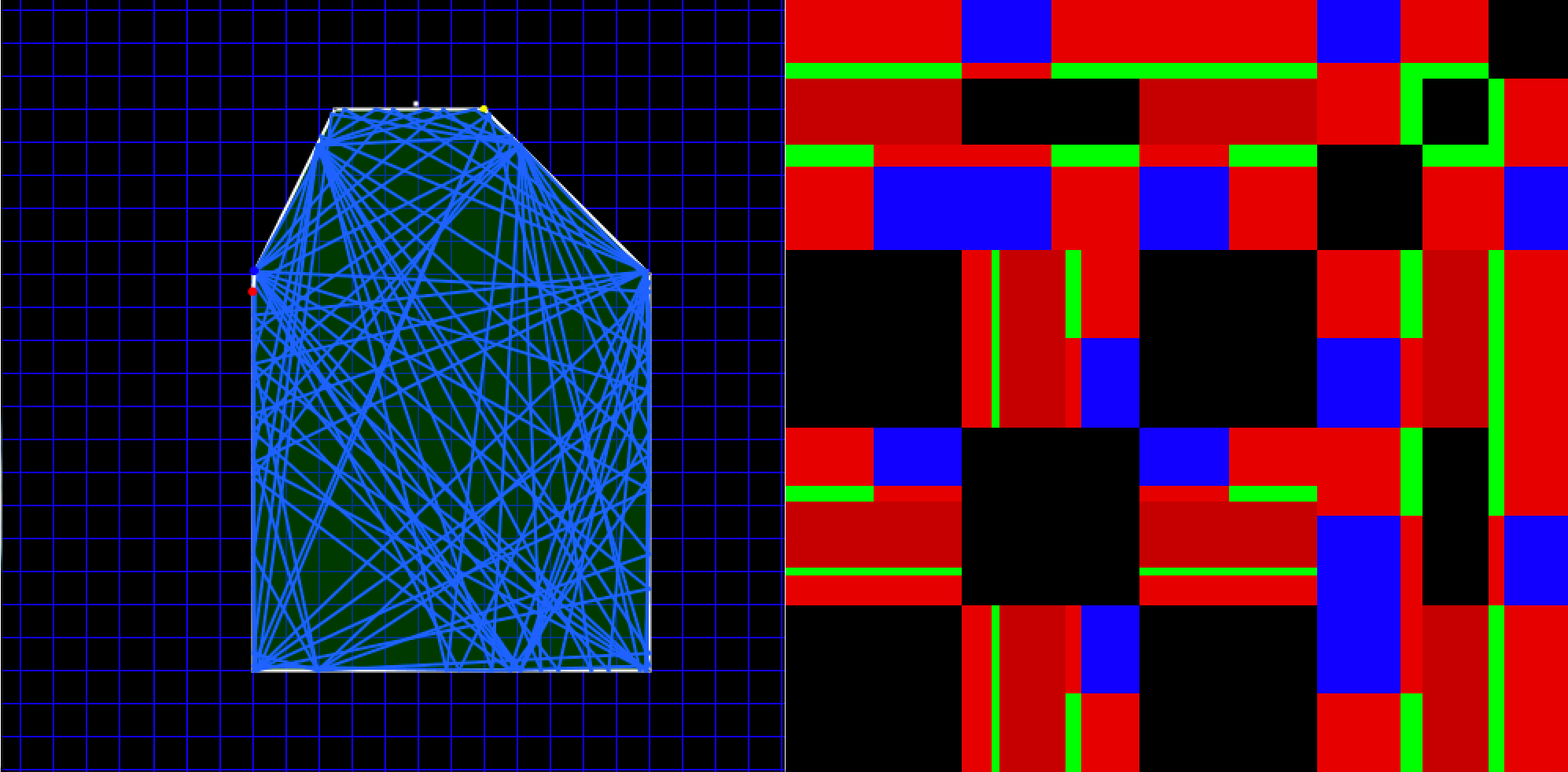}%
  }%
  \hfill%
  \subfloat[]{%
    \includegraphics[height=\figheight,trim={35.3cm 0 5pt 0},clip]{figures/hexhouse_periodic_2_flipped}%
  }%
  \caption{\label{fig:hexhouse-list-2}
    Another periodic \hexhouseex with periods 4, 28, 108, 188.
  }
\end{figure} 

\begin{figure}[t]
  \newcommand{\figheight}{0.494\linewidth}%
  \centering%
  \subfloat[]{%
    \includegraphics[height=\figheight,trim={0 0 37.7cm 2cm},clip]{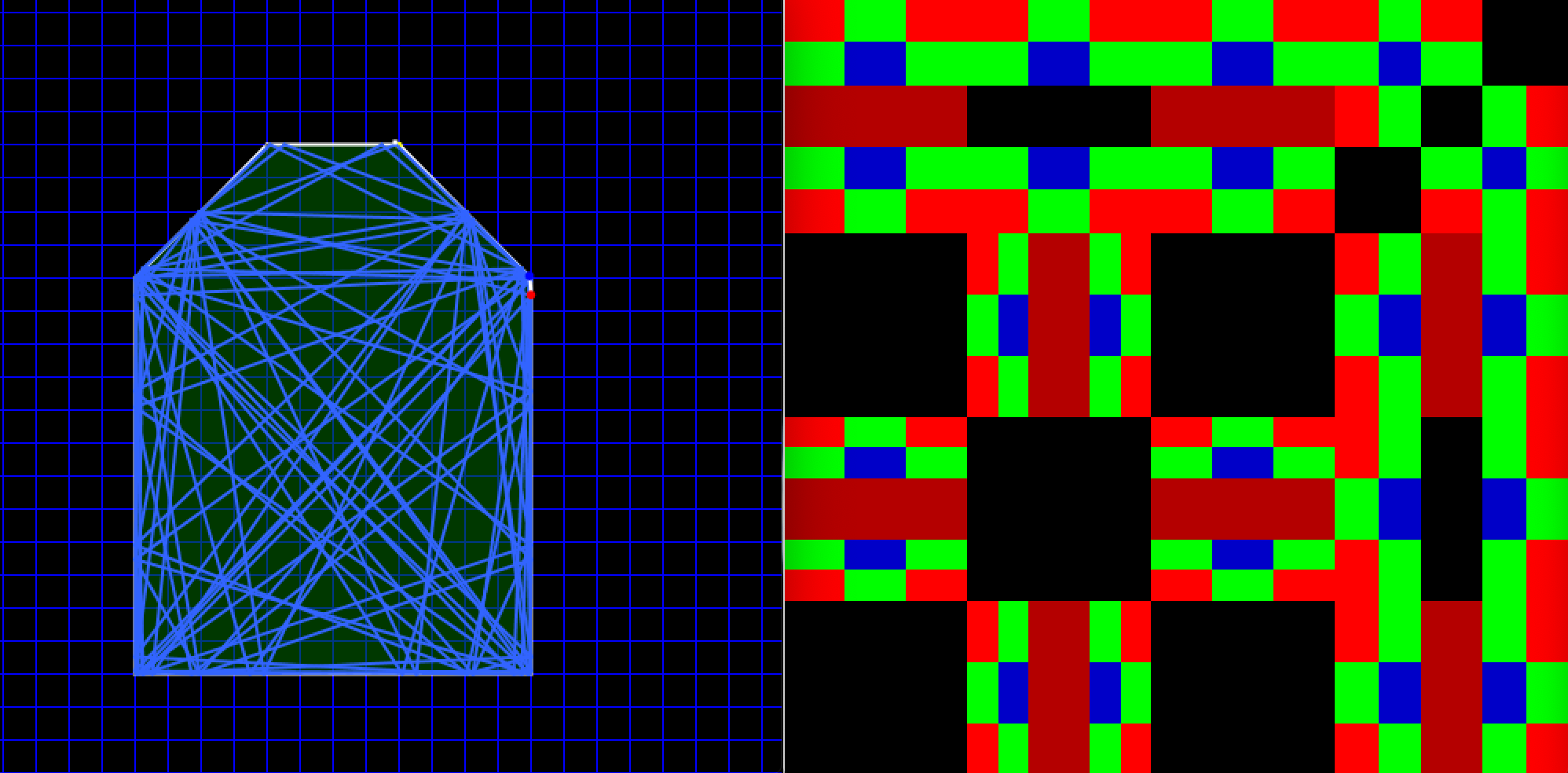}%
  }%
  \hfill%
  \subfloat[]{%
    \includegraphics[height=\figheight,trim={35.3cm 0 5pt 0},clip]{figures/hexhouse_periodic_3}%
  }%
  \caption{\label{fig:hexhouse-list-3}
    Yet another periodic \hexhouseex with periods 4, 44, 68, 92.
  }
\end{figure} 

\begin{figure}[t]
  \newcommand{\figheight}{0.4992\linewidth}%
  \centering%
  \subfloat[]{%
    \includegraphics[height=\figheight,trim={3cm 0 35.7cm 2cm},clip]{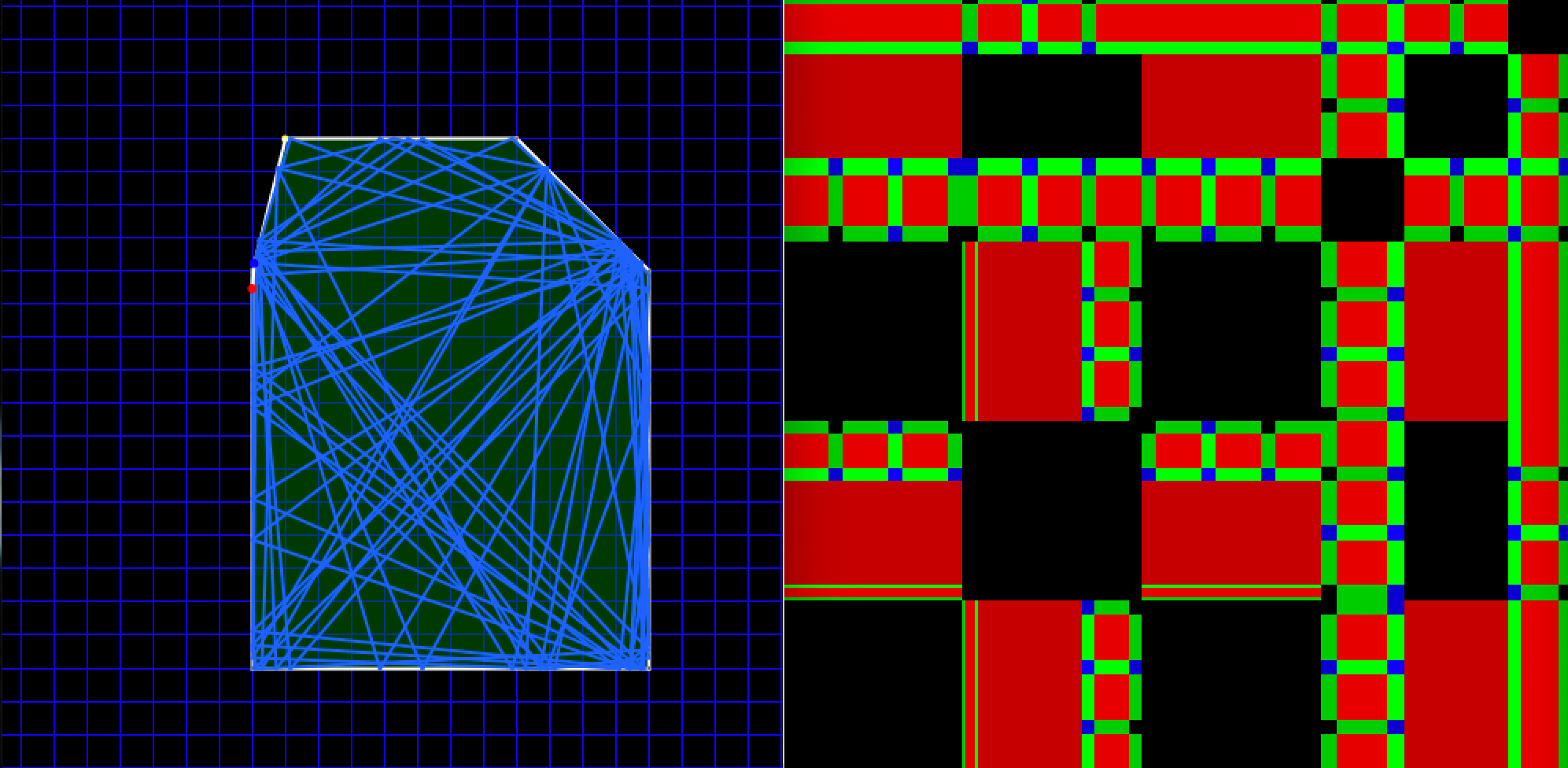}%
  }%
  \hfill%
  \subfloat[]{%
    \includegraphics[height=\figheight,trim={35.3cm 0 6pt 0},clip]{figures/hexhouse_periodic_4_flipped}%
  }%
  \caption{\label{fig:hexhouse-list-4}
    The last periodic \hexhouseex, periods are 4, 28, 44, 60, 68, 84, 108.
  }
\end{figure} 

\begin{figure}[t]
\centering%
\includegraphics[height=4in,trim={0 0 5pt 7pt},clip]{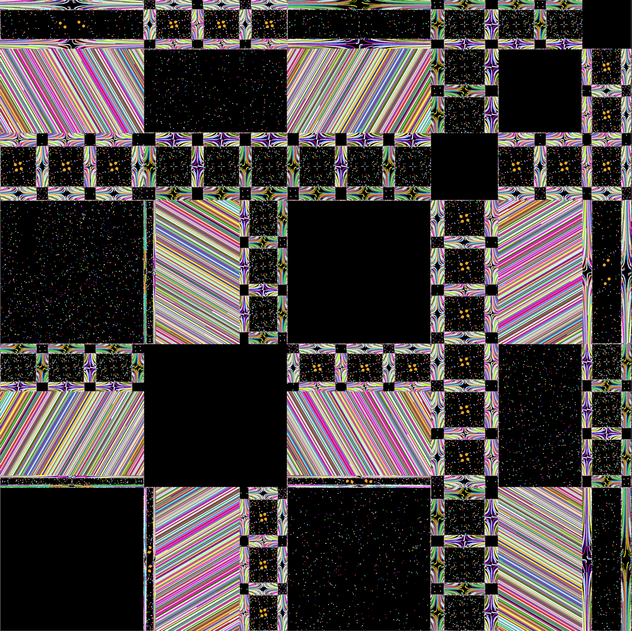}%
\caption{\label{fig:hexhouse-perturbed}
A small perturbation of the previous \hexhouseex.
}
\end{figure} 

\clearpage

Special octagons are lattice octagons whose opposite sides are parallel and have slopes $0,\pm1, \infty$, and that 
 have an axis of symmetry parallel to a pair of sides, see \cref{octapar}. Their symmetries are dilations and translations in $\mathbf{R}^2$. The corresponding moduli space is 3-dimensional over the integers since we only consider lattice octagons. This can be seen as follows. Fix a rectangle with vertical and horizontal sides centered at the origin. Then cut of two corners with diagonal lines and cut the other two corners according to symmetry. These are four-dimensional choices which after dividing out dilations give rise to a three dimensional moduli space.

\begin{figure}[t]
  \centering%
\includegraphics[height=2in]{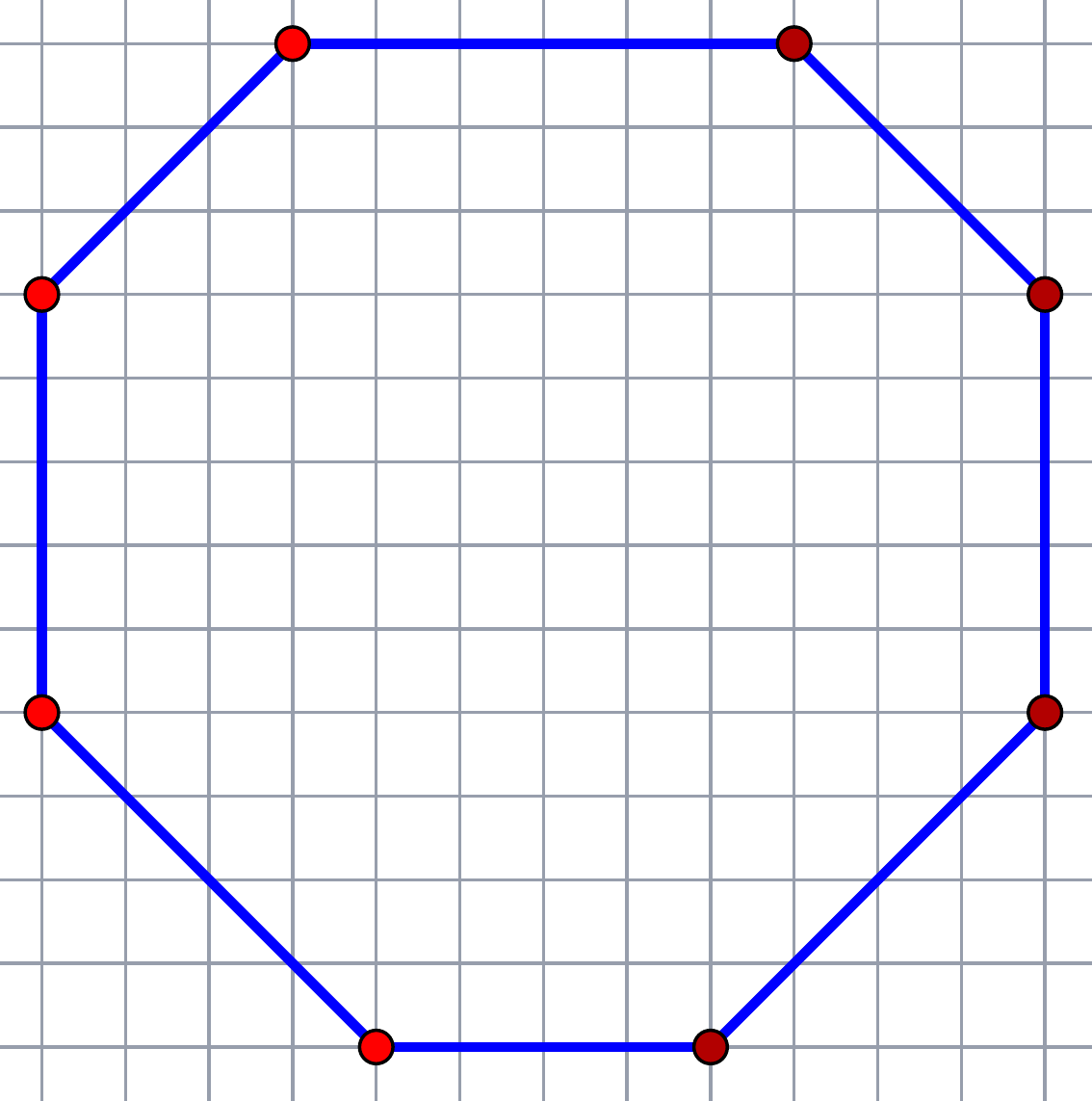}%
\caption{\label{octapar}
A special octagon. 
}
\end{figure} 

\cref{fig:special-octagon-1,fig:special-octagon-2} show two pictures as evidence for the following conjecture, and \cref{fig:special-octagon-perturbed} again the typical pattern caused by a perturbation.

\begin{conjecture} \label{conj:special_octagons}
All orbits in a special octagon are periodic.
\end{conjecture}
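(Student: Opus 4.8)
The plan is to imitate the proof of \cref{thm:parhex} and reduce the conjecture to a single finiteness statement. By \cref{lm:connected}, the complement of the discontinuity set is a disjoint union of phase rectangles (tiles), and every tile of positive area is periodic. Hence it suffices to prove that \emph{the discontinuity set of a special octagon is a finite union of horizontal and vertical segments}. Granting this, the complement is a finite arrangement of open rectangles; each is connected and carries a constant symbolic coding (all iterates of $T$ being defined and continuous on it), so each is a single tile of positive area, and \cref{lm:connected} forces it to be periodic. To set this up, I would first note, using \cref{lm:area}, that the chords with one endpoint at a vertex form finitely many horizontal and vertical segments, and that the map $T\colon(x,y)\mapsto(y,ax+b)$ interchanges horizontal and vertical segments. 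Thus the discontinuity set is exactly the $T$-orbit of this finite initial family, and its finiteness is equivalent to the finiteness of the forward and backward orbits of the vertices under the induced one-dimensional projection dynamics on the boundary. As everything is bounded, such an orbit is finite precisely when it is eventually periodic, so the real task is to exclude aperiodic, equidistributing endpoint orbits.

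Here lies the essential difficulty, and the reason the hexagon argument does not carry over. In the equilateral directions of \cref{cor:hexagon_from_equilateral_triangle} one has $a=-1$ throughout, so $T$ is a Euclidean isometry preserving the integer lattice on the sides and vertex orbits are automatically finite. For slopes $0,\pm1,\infty$, however, the distortion $a=-\sin\alpha/\sin\beta$ takes the values $1,\sqrt2,1/\sqrt2$, and compositions produce arbitrary powers of $\sqrt2$. A priori the vertex orbits therefore live in the dense ring $\Z[\sqrt2]$, and the whole content of the conjecture is that these factors of $\sqrt2$ must cancel along every orbit so as to keep it finite.

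To control this I would pass to the difference-body model of \cref{lm:diffbody}. There $T$ becomes the piecewise map $w\mapsto t\,v_j-w$, a central symmetry along a lattice direction $v_j$, in which the intersection parameter is forced into $\tfrac12\Z$: for the primitive side directions $v_j$ and primitive normals $n_k$ occurring here one has $\langle n_k,v_j\rangle\in\{\pm1,\pm2\}$, so $t=(c_k-\langle n_k,A\rangle)/\langle n_k,v_j\rangle\in\tfrac12\Z$. This converts the density statement in $\Z[\sqrt2]$ into a dyadic one: the orbit remains in refinements of the lattice $\Z^2$, and finiteness becomes equivalent to a uniform bound on the $2$-adic denominators of the iterates. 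The axis of symmetry of a special octagon should be exploited both to halve the number of transition types and to force symmetric orbits to close up.

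The main obstacle is precisely this dyadic bound: proving that the $2$-valuations of the iterates do not grow, i.e.\ that the halvings and doublings coming from the $\sqrt2$-distortions balance along every orbit. I expect the cleanest route is a renormalization argument suggested by the very appearance of powers of $\sqrt2$. The goal would be a corner-cutting and rescaling operation sending a special octagon to a smaller special octagon of strictly smaller complexity, for instance the quantity $N$ of \cref{thm:parhex} or the lattice size, conjugating the first-return map to a maximal horizontal side to the corresponding map of the smaller octagon, and then inducting as in the trapezoid modulus argument recalled before \cref{conj:pent}. If no clean renormalization is available, a fallback is to show directly that this first-return map is an interval-exchange-type map with dyadic breakpoints and to rule out irrational rotation numbers by a valuation argument; and, since the moduli space is only three-dimensional over the integers, a monotonicity-plus-scaling reduction could in principle reduce the problem to a finite, computer-checkable case analysis.
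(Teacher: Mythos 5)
You should first be clear about the status of the target statement: \cref{conj:special_octagons} is a \emph{conjecture} in the paper, supported only by the numerical evidence of \cref{fig:special-octagon-1,fig:special-octagon-2}; the paper contains no proof of it. So there is no paper argument to compare against, and your proposal has to stand on its own as a proof. It does not: it is a (well-informed) research program whose central step is explicitly left open. Your reduction is correct and worth having: by \cref{lm:area} the discontinuity set is the $T$-orbit of the finitely many vertex segments, and if that set is a \emph{finite} union of horizontal and vertical segments, then the coding rectangles of \cref{lm:connected} have their sides on finitely many lines, hence stabilize, so every tile is an open rectangle of positive area and \cref{lm:connected} forces periodicity of every orbit. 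This is indeed the mechanism behind \cref{thm:parhex}, and you correctly diagnose why it fails here: for slopes $0,\pm1,\infty$ the distortion $a=-\sin\alpha/\sin\beta$ takes values $\pm1,\pm\sqrt2,\pm1/\sqrt2$, so there is no invariant lattice and vertex orbits a priori live in a dense set. But the claim that the resulting $2$-adic denominators stay bounded along every vertex orbit (equivalently, that the discontinuity set is finite) \emph{is} the conjecture, and for it you offer only candidate strategies --- a renormalization to a ``smaller'' special octagon, an interval-exchange/valuation argument, a finite computer check --- none of which is carried out or even shown to be feasible. That is the gap, and it is the entire content of the statement.

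Two of your intermediate claims are also inaccurate as stated, though they do not affect the diagnosis. First, the induced map on the difference body of \cref{lm:diffbody} is not a piecewise central symmetry: in $w\mapsto t\,v_j-w$ the parameter $t$ is an affine function of $w$ on each piece (it is constant in $w$ only when the entry and exit sides are parallel), so the map is piecewise affine, not a piecewise point reflection. Second, the inference $t=(c_k-\langle n_k,A\rangle)/\langle n_k,v_j\rangle\in\tfrac12\Z$ uses $\langle n_k,A\rangle\in\Z$, which holds only when the tail $A$ is a lattice point; after one step the relevant points lie in $\tfrac12\Z^2$, after two in $\tfrac14\Z^2$, and so on --- the denominators can double at every step, which is precisely the growth you would need to rule out. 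So the honest summary is: a sound reduction of the conjecture to a finiteness statement, a correct identification of the obstruction, and no proof of the obstruction --- the conjecture remains open in your proposal exactly as it does in the paper.
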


\begin{figure}[t]
  \newcommand{\figheight}{0.494\linewidth}%
  \centering%
  \subfloat[]{%
    \includegraphics[height=\figheight,trim={0 0 17.6cm 0},clip]{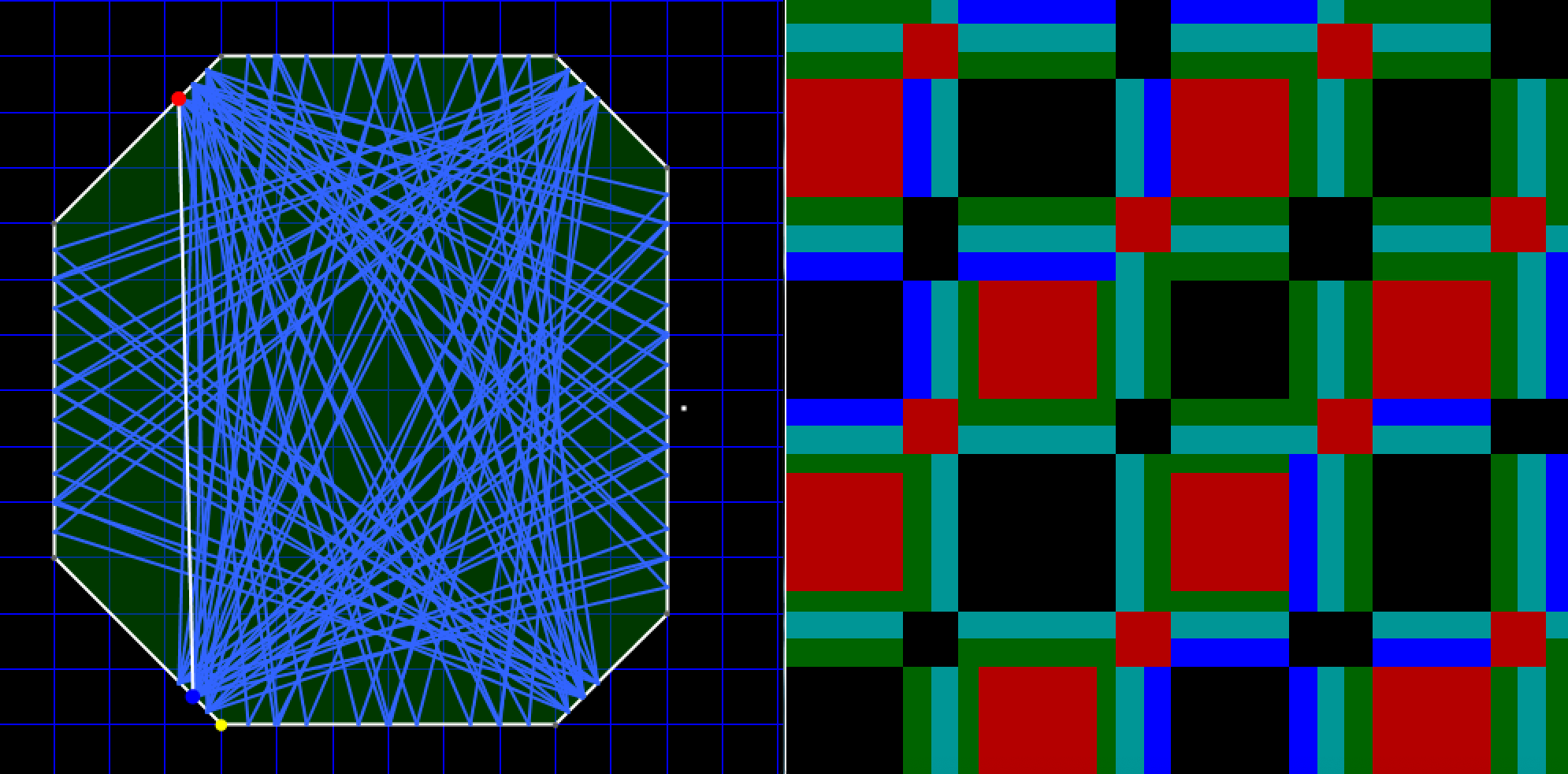}%
  }%
  \hfill%
  \subfloat[]{%
    \includegraphics[height=\figheight,trim={17.85cm 0 0 0},clip]{figures/octa_periodic_1}%
  }%
  \caption{\label{fig:special-octagon-1}
    A periodic special octagon with periods: 4, 56, 68, 108. 
}
\end{figure} 

\begin{figure}[t]
  \newcommand{\figheight}{0.494\linewidth}%
  \centering%
  \subfloat[]{%
    \includegraphics[height=\figheight,trim={0 0 18cm 0},clip]{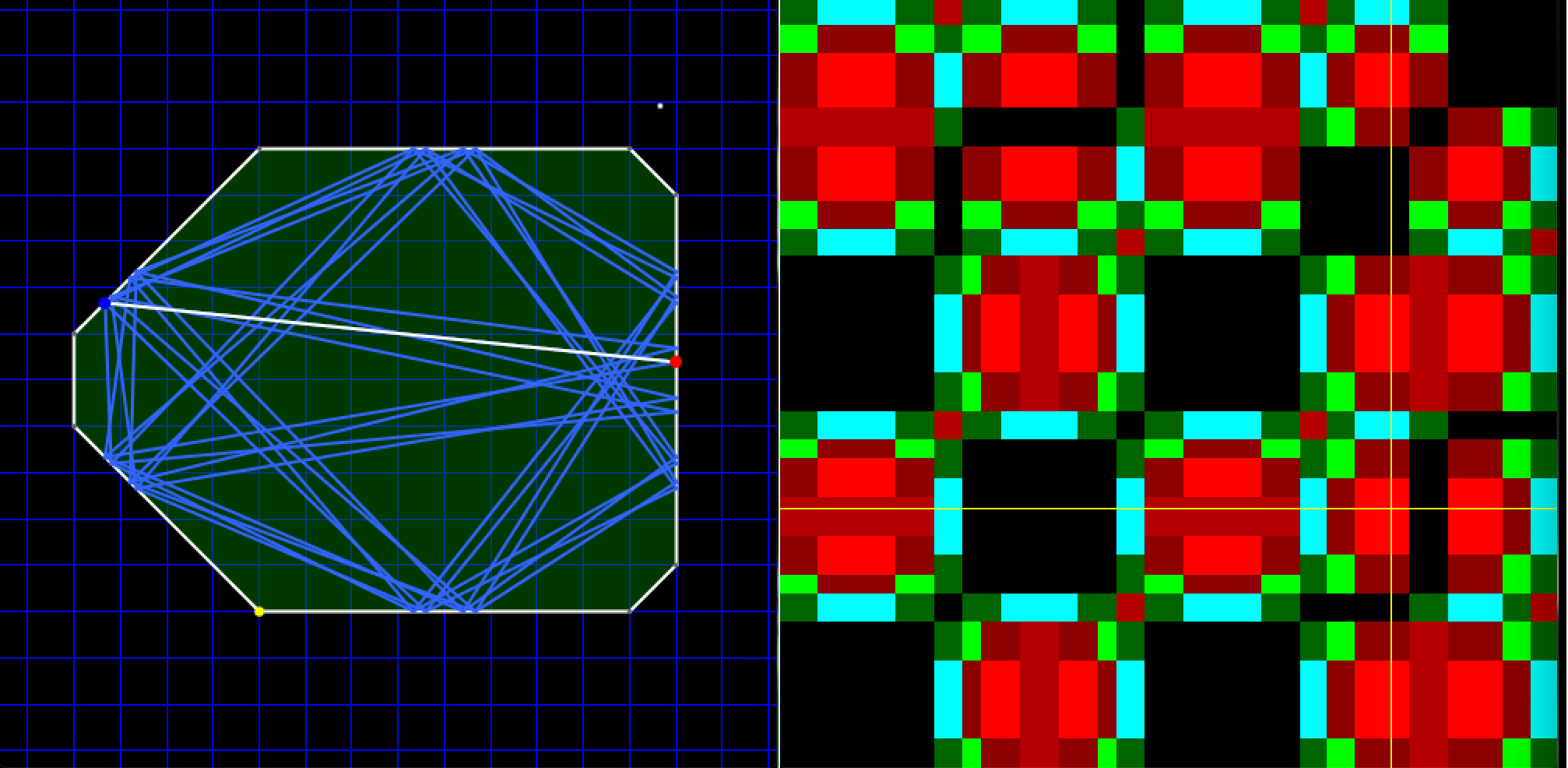}%
  }%
  \hfill%
  \subfloat[]{%
    \includegraphics[height=\figheight,trim={17.9cm 0 8pt 0},clip]{figures/octa_periodic_2}%
  }%
\caption{\label{fig:special-octagon-2}
A second periodic special octagon with periods: 4, 16, 32, 44, 68, 92.
}
\end{figure} 

\begin{figure}[t]
\centering%
\includegraphics[height=4in,trim={3pt 0 5pt 0},clip]{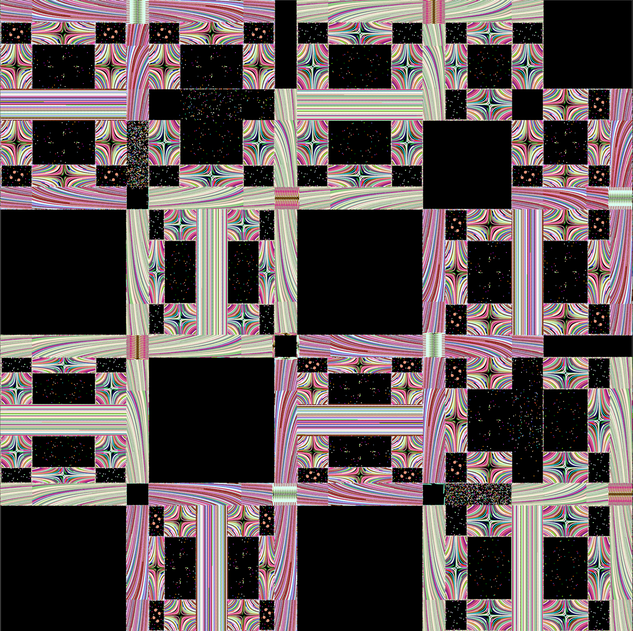}%
\caption{\label{fig:special-octagon-perturbed}
A small perturbation of the previous special octagon.
}
\end{figure} 

\section{Open problems and conjectures} \label{sect:problems}

An outstanding open problem concerning (Euclidean) polygonal billiards is whether they always have a periodic orbit. This is not known even for obtuse triangles (the acute and right triangles possess periodic orbits). The current state of the art is that all obtuse triangles with the angles not exceeding $112.3$ degrees have periodic billiard trajectories, \cite{Sch1,To}. In contrast, all polygonal outer billiards possess periodic orbits, \cite{Ta1}.

\begin{question}
Do all polygonal symplectic billiards have periodic trajectories?
\end{question}

This is particularly intriguing since a computer search on the kites with corners given by $(-1,1),(-1,-1),(1,-1),(3,3)$  did not find a periodic orbit of period less than 2000. This is the ``smallest'' lattice kite not being a square. 

\bigskip
\subsection*{Acknowledgements} This work is supported by Deutsche Forschungsgemeinschaft (DFG) under Germany's Excellence Strategy EXC-2181/1 - 390900948 (the Heidelberg STRUCTURES Excellence Cluster) and by the Transregional Colloborative Research Center SFB / TRR 191, NSF grant DMS-1510055, the Interdisciplinary Center for Scientific Computing (IWR), and HGS MathComp.

We also would like to thank Lutz Hofmann and Peter H\"ugel for their technical support and contributions, and respective funding within the subproject A7 of the Transregional Colloborative Research Center SFB / TRR 165.

\end{document}

\section{List of suggestions and further questions etc}

\begin{itemize}\setlength{\itemsep}{3ex}
	
	\item symbolic orbits and complexity
	\begin{itemize}
		\item Definition of complexity function: $p(n):= \#$ symbolic orbits of length $n$.
		\item Get lower bound for $p(n)$ by an empirical function from sampling. 
		\item Questions: What is a good sample?
		\item Question: Growth type of this function? Exponential/polynomial/in between/none?
		\item Suggestions: Plot $\ln(p(n))$
		\item Questions: Does $\ln(p(n))$ grow and can one estimate growth type?
		\item Suggestion: Automate the plot of $\ln(p(n))$. E.g.
		\begin{itemize}
			\item Consider a set of quadrilaterals by moving one vertex e.g. on $\mathbb{Z}^2$.
			\item Compute and plot $\ln(p(n))$ color coded into one coordinate system.
		\end{itemize}
		\item Remark: One can replace $n\mapsto\ln(p(n))$ by $n\mapsto \frac{\ln(p(n))}{n}$ or $n\mapsto \frac{\ln(p(n))}{\ln(n)}.$
		\item A specific class of examples is the \penthouseex with top vertex of fixed height but moving this vertex horizontally. Apparently it's always periodic with the same periods but the symbolic orbits change. Is this correct? Do we see a pattern?
	\end{itemize}	
	
	\item elliptic islands
	\begin{itemize}
		\item Can we get a high resolution plot of say \cref{pent_perturb}? We may ask more then :-).
		\item Are the octogonal ``islands" surrounding 7(?)-periodic orbits?
	\end{itemize}
	
	\item More periodic billiards
	\begin{itemize}
		\item Can we sample part of the space of say quadrilaterals and look for fully periodic billiards automatically? So far, only the \quadex is an example.
		\item How about 5, 6, 7-gons etc.?
		\item Can we check that every table has at least one periodic orbit?
	\end{itemize} 
	
	\item saddle connections, discontinuity set and attracting corners
	\begin{itemize}
		\item Definition of saddle connections: start at a vertex in direction of some side and iterated until you end up in a vertex again. 
		\item Find and draw all saddle connections on configuration space (=table) and phase space.
		\item Question: Is the a orbit ending (starting) at a vertex but not starting (ending) at a vertex?
		\item Definition of discontinuity set: union of saddle connections and "half-infinite connections"
		\item Draw discontinuity set.
		\item Lemma: Complement of discontinuity set is a union of rectangles (potentially degenerate, i.e. line segments or points). "Real" rectangles correspond to open sets of periodic orbits.
		\item Question: Can one automatically detect "real" rectangles? 
		\item Question: Are the orbits with limit point being a corner?
	\end{itemize}
	
	\item Different color codings.\\ @Filip: Can you add ideas here?

	\item According to L\cref{lm:diffbody} the difference body $D(\mathbf{P})$ is in bijection to the phase space. Can we draw the dynamics on $D(\mathbf{P})$ instead? This might be close to the "actual dynamics".
	
	\item "Red/blue dynamics": Only connect points 1,3,5,7... (in blue) and 2,4,6,8... (in red). Does this show something interesting?
	
	\item We found the following strange kites. No periodic orbits with period less than 1000??
	
	\begin{figure}[t]
	\centering
	\includegraphics[height=2in]{figures/strange_kite.png}
	\caption{Strange kite. Corners are: $(-1,1),(-1,-1),(1,-1),(3,3)$. This is ''smallest'' lattice kite not being a square. }
	\includegraphics[height=2in]{figures/strange_kite_2.png}
	\caption{Strange kite. Corners are: $(-1,1),(-1,-1),(1,-1),(4,4)$.}
	\end{figure} 
	
	Maybe a conjecture is that kites close to the square have no periodic orbits?

\end{itemize}

\newpage


\begin{thebibliography}{99}

\bibitem{AT} P. Albers, S. Tabachnikov. {\it Introducing symplectic billiards}. Advances in Math. {\bf 333} (2018), 822--867.  

\bibitem{Sch1} R.\ Schwartz. {\it Obtuse triangular billiards. II. One hundred degrees worth of periodic trajectories.} 
Experiment. Math. {\bf 18} (2009), 137--171. 

\bibitem{Ta} S.\ Tabachnikov. {\it Geometry and billiards.} Amer. Math. Soc., Providence, RI, 2005.

\bibitem{Ta1} S.\ Tabachnikov. {\it A proof of Culter's theorem on the existence of periodic orbits in polygonal outer billiards.} Geom. Dedicata {\bf 129} (2007), 83--87.

\bibitem{To} G.\ Tokarsky, J.\ Garber, B.\ Marinov, K.\ Moore. {\it One hundred and twelve point three degree theorem.}   arXiv:1808.06667.

\bibitem{BNAL} S.\ Boshe-Plois, Q.\ Q. Ngo, P.\ Albers, L.\ Linsen. {\it Visual analysis of billiard dynamics simulation ensembles.} Proceedings of International Conference on Information Visualization Theory and Applications (IVAPP), to appear.
 
\end{thebibliography}

\begin{thebibliography}{99}

\bibitem{AT} P. Albers, S. Tabachnikov. {\it Introducing symplectic billiards}. Advances in Math. {\bf 333} (2018), 822--867.  

\bibitem{Sch1} R. Schwartz. {\it Obtuse triangular billiards. II. One hundred degrees worth of periodic trajectories.} 
Experiment. Math. {\bf 18} (2009), 137--171. 

\bibitem{Ta} S. Tabachnikov. {\it Geometry and billiards.} Amer. Math. Soc., Providence, RI, 2005.

\bibitem{Ta1} S. Tabachnikov. {\it A proof of Culter's theorem on the existence of periodic orbits in polygonal outer billiards.} Geom. Dedicata {\bf 129} (2007), 83--87.

\bibitem{To} G. Tokarsky, J. Garber, B. Marinov, K. Moore. {\it One hundred and twelve point three degree theorem.}   arXiv:1808.06667.
 
\end{thebibliography}
\end{document}